\documentclass[11pt]{article}
\usepackage{enumerate}
\usepackage{amssymb,a4wide,latexsym,makeidx,epsfig}
\usepackage{amsthm}
\usepackage{dsfont}
\usepackage{amsmath}
\usepackage{lipsum}
\usepackage{enumerate}
\usepackage{mathrsfs}
\usepackage{xcolor}
\usepackage{tikz}
\usepackage{setspace}
\usepackage{geometry}
\geometry{left=3.0cm,right=3.0cm,top=2.5cm,bottom=2cm}
\usepackage{appendix}
\usepackage{multirow}
\usepackage[colorlinks=true, linkcolor=black, anchorcolor=black, citecolor=black, urlcolor=black, CJKbookmarks=true]{hyperref}
\usepackage{microtype}
\usepackage{colonequals}

\allowdisplaybreaks

\newtheorem{theorem}{Theorem}[section]
\newtheorem{remark}[theorem]{Remark}

\newtheorem{lemma}[theorem]{Lemma}

\newtheorem{proposition}[theorem]{Proposition}
\newtheorem{corollary}[theorem]{Corollary}

\newtheorem{question}[theorem]{Question}

\newtheorem{claim}{Claim}[section]

\newcommand{\rsat}{{\rm rsat}}
\newcommand{\rwsat}{{\rm rwsat}}
\newcommand{\sat}{{\rm sat}}
\newcommand{\wsat}{{\rm wsat}}
\newcommand{\ex}{{\rm ex}}

\begin{document}
\textwidth 150mm \textheight 225mm

\title{Weak rainbow saturation numbers of graphs
}
\author{
Xihe Li\footnote{School of Mathematical Sciences, University of Science and Technology of China, Hefei 230026, China.}~~~~~~
Jie Ma\footnotemark[1]~\footnote{Hefei National Laboratory, University of Science and Technology of China, Hefei 230088, China.}~~~~~~
Tianying Xie\footnotemark[1]
}
\date{}
\maketitle
\begin{center}
\begin{minipage}{120mm}
\vskip 0.3cm
\begin{center}
{\small {\bf Abstract}}
\end{center}
{\small For a fixed graph $H$, we say that an edge-colored graph $G$ is \emph{weakly $H$-rainbow saturated} if there exists an ordering $e_1, e_2, \ldots, e_m$ of $E\left(\overline{G}\right)$ such that, for any list $c_1, c_2, \ldots, c_m$ of pairwise distinct colors from $\mathbb{N}$, the non-edges $e_i$ in color $c_i$ can be added to $G$, one at a time, so that every added edge creates a new rainbow copy of $H$. The \emph{weak rainbow saturation number} of $H$, denoted by $\rwsat(n,H)$, is the minimum number of edges in a weakly $H$-rainbow saturated graph on $n$ vertices. In this paper, we show that for any non-empty graph $H$, the limit $\lim_{n\to \infty} \frac{\rwsat(n, H)}{n}$ exists. This answers a question of Behague, Johnston, Letzter, Morrison and Ogden [{\it SIAM J. Discrete Math.} (2023)]. We also provide lower and upper bounds on this limit, and in particular, we show that this limit is nonzero if and only if $H$ contains no pendant edges.
\vskip 0.1in \noindent {\bf AMS Subject Classification (2020)}: \ 05C15, 05C35
}
\end{minipage}
\end{center}

\section{Introduction}
\label{sec:introduction}

Typical extremal graph theory problems ask for the maximum or minimum value of parameters of graphs with certain properties. A classical example falling within this framework is the Tur\'{a}n problem which asks, for a fixed graph $H$, what is the maximum number of edges in an $H$-free\footnote{Given two graphs $G$ and $H$, we say that $G$ is $H$-free if $G$ contains no subgraph isomorphic to $H$.} graph on $n$ vertices. Another classical problem is the saturation problem which was initiated by Zykov \cite{Zyk} in the 1940s and first studied by Erd\H{o}s, Hajnal and Moon \cite{ErHM} in the 1960s. For a fixed graph $H$, a graph $G$ is called \emph{$H$-saturated} if $G$ is $H$-free but adding any non-edge to $G$ creates a copy of $H$. The \emph{saturation number} $\sat(n, H)$ is the smallest number of edges in an $H$-saturated graph on $n$ vertices. Erd\H{o}s, Hajnal and Moon \cite{ErHM} proved that $\sat(n, K_t)={n\choose 2}-{n-t+2 \choose 2}$, and this was generalized to the hypergraph setting by Bollob\'{a}s \cite{Bol965} using the well-known set-pairs inequality. A graph $G$ is called \emph{weakly $H$-saturated} if there exists an ordering $e_1, e_2, \ldots, e_m$ of the non-edges of $G$ such that for each $i\in [m]$, the graph $G_i\colonequals G+\{e_1, \ldots, e_i\}$ contains a copy of $H$ containing $e_i$ as an edge. The \emph{weak saturation number} $\wsat(n, H)$ is the smallest number of edges in a weakly $H$-saturated graph on $n$ vertices. In \cite{Bol968}, Bollob\'{a}s conjectured that $\wsat(n, K_t)=\sat(n, K_t)$. This conjecture was confirmed by Kalai \cite{Kal} using exterior algebra, and reproved by Alon \cite{Alon985JCTA} using the skewed version of the Bollob\'{a}s set-pairs inequality. Moreover, Alon \cite{Alon985JCTA} proved that the limit $\lim_{n\to \infty} \frac{\wsat(n, H)}{n}$ exists for every non-empty graph $H$. The hypergraph version of Alon's result was conjectured by Tuza \cite{Tuza992DM} in 1992 and proved by Shapira and Tyomkyn \cite{ShTy} very recently. In 1986, Tuza \cite{Tuza986,Tuza988} conjectured that the limit $\lim_{n\to \infty} \frac{\sat(n, H)}{n}$ exists for every graph $H$, and this conjecture still remains open; see \cite[Section~14]{CFFS} for more information.

The edge-coloring version of the saturation problem was raised by Hanson and Toft \cite{HaTo} in 1987. For a graph $G$, we refer to a mapping $c: E(G) \to \mathbb{N}$ as an \emph{edge-coloring} of $G$. A graph with an edge-coloring is called \emph{monochromatic} if all edges are colored the same. Hanson and Toft \cite{HaTo} focused on the saturation problem of monochromatic cliques. A graph with an edge-coloring is called \emph{rainbow} if all edges are colored differently. The study of rainbow colored graphs can be traced back to the Latin square decomposition problem initiated by Euler in the 1780s. In combinatorics, many classical problems can be transferred to the problem of finding certain rainbow substructures in edge-colored graphs, such as Ringel's conjecture \cite{MoPS21}, the Ryser-Brualdi-Stein conjecture \cite{Pok18} and the Caccetta-H\"{a}ggkvist conjecture \cite{AhDH}. In the last two decades, rainbow generalizations of Tur\'{a}n-type problems \cite{Jan23IJM,KMSV} and Ramsey-type problems \cite{FoGP,LiBW,SuVo} became an active research area. The rainbow generalization of saturation problems was first studied by Barrus, Ferrara, Vandenbussche and Wenger \cite{BFVW} in 2017. They considered saturation problems of rainbow subgraphs in an edge-colored host graph with a bounded number of colors.

For a fixed graph $H$, we say that an edge-colored graph $G$ is \emph{$H$-rainbow saturated} if $G$ does not contain a rainbow copy of $H$, but the addition of any non-edge in any color from $\mathbb{N}$ creates a rainbow copy of $H$. Gir\~{a}o, Lewis and Popielarz \cite{GiLP} defined the \emph{rainbow saturation number} of $H$, denoted by $\rsat(n,H)$, to be the minimum number of edges in an $H$-rainbow saturated graph on $n$ vertices. Gir\~{a}o, Lewis and Popielarz \cite{GiLP} conjectured that the rainbow saturation number of any non-empty graph is at most linear in $n$. Recently, Behague, Johnston, Letzter, Morrison and Ogden \cite{BJLMO} confirmed this conjecture. For more related works, we refer the interested reader to \cite{BuJR,CaMT,CHLT,Kor}. For a fixed graph $H$, we say that an edge-colored graph $G$ is \emph{weakly $H$-rainbow saturated} if there exists an ordering $e_1, e_2, \ldots, e_m$ of $E\left(\overline{G}\right)$ such that, for any list $c_1, c_2, \ldots, c_m$ of pairwise distinct colors from $\mathbb{N}$, the non-edges $e_i$ in color $c_i$ can be added to $G$, one at a time, so that every added edge creates a new rainbow copy of $H$. Behague et al. \cite{BJLMO} defined the \emph{weak rainbow saturation number} of $H$, denoted by $\rwsat(n,H)$, to be the minimum number of edges in a weakly $H$-rainbow saturated graph on $n$ vertices.\footnote{In the case that there exists no $H$-rainbow saturated (resp., weakly $H$-rainbow saturated) graph on $n$ vertices, we define $\rsat(n,H)\colonequals |E(K_n)|$ (resp., $\rwsat(n,H)\colonequals |E(K_n)|$).}

As pointed out in \cite[Section~6]{BJLMO}, in the definition of the weak rainbow saturation number, we require the collection of added edges to receive pairwise distinct colors, so in particular, we exclude the possibility that all added edges have the same color, in which case the previously added edges do not contribute to making new rainbow copies and the problem reduces to the standard rainbow saturation number. Moreover, note that $c_1, c_2, \ldots, c_m$ are colors from $\mathbb{N}$, so some of them might be used within the original edges of $G$. Furthermore, for a weakly $H$-rainbow saturated graph $G$, we do not require $G$ itself to be rainbow $H$-free.

By the definitions, we have $\wsat(n, H)\leq \rwsat(n, H)\leq \rsat(n, H)$. Hence, the above mentioned result of Behague et al. \cite{BJLMO} on $\rsat(n, H)$ implies that $\rwsat(n, H)=O(n)$ for any non-empty graph $H$. Extending the result of Alon~\cite{Alon985JCTA}, Behague et al. posed the following question.

\begin{question}[\cite{BJLMO}]\label{ques:rwsat}
For any non-empty graph $H$, does the limit $\lim_{n\to \infty} \frac{\rwsat(n, H)}{n}$ exist?
\end{question}

In this paper, we fully resolve this question by proving the following theorem.

\begin{theorem}\label{th:rwsat_limits}
For any non-empty graph $H$, the limit $\lim_{n\to \infty} \frac{\rwsat(n, H)}{n}$ exists.
\end{theorem}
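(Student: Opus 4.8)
The plan is to prove that $\rwsat(\cdot,H)$ is subadditive up to an additive constant, and then to invoke Fekete's lemma. Concretely, I aim to show that there is a constant $C=C(H)$ such that
\[
\rwsat(a+b,H)\le \rwsat(a,H)+\rwsat(b,H)+C
\]
for all sufficiently large $a,b$. Granting this, the sequence $g(n):=\rwsat(n,H)+C$ satisfies $g(a+b)\le g(a)+g(b)$, so by Fekete's subadditivity lemma $\lim_{n\to\infty} g(n)/n=\inf_n g(n)/n$ exists and is finite (it is nonnegative, and bounded above because $\rwsat(n,H)=O(n)$ by the $\rsat$ bound of Behague et al.\ recorded above). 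Since $g(n)/n$ and $\rwsat(n,H)/n$ differ by $C/n\to 0$, this yields Theorem~\ref{th:rwsat_limits}. Note that no separate lower-bound argument is needed: Fekete already delivers convergence.

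To establish the near-subadditivity I would build a single witness on $a+b$ vertices by merging extremal witnesses for $a$ and $b$. Let $G_1$ be an edge-colored weakly $H$-rainbow saturated graph on a vertex set $A$ with $|A|=a$ and $\rwsat(a,H)$ edges, witnessed by an ordering of $E(\overline{G_1})$, and let $G_2$ be the analogous object on $B$ with $|B|=b$. Take $G:=G_1\cup G_2$ together with a connector of $C=C(H)$ additional cross edges, colored by fresh distinct colors that I choose, and keep the colorings of $G_1,G_2$ on their respective parts. I would order the non-edges of $G$ in three phases: first the non-edges inside $A$ (in the order witnessing $G_1$), then those inside $B$ (witnessing $G_2$), and finally the cross edges between $A$ and $B$. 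The crucial observation throughout is that the added edges always receive pairwise distinct colors, so any copy of $H$ whose edges are all added edges (I will call these \emph{fresh}) is automatically rainbow. Phases~1 and~2 are then immediate: adding the interior non-edges of $A$ in the prescribed order produces, by the weak $H$-rainbow saturation of $G_1$, a rainbow copy of $H$ inside $A$ for each added edge and for every choice of distinct colors, and symmetrically for $B$.

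The hard part will be Phase~3. After Phases~1 and~2 the graphs on $A$ and on $B$ are complete, and within each part the non-$G_i$ edges form a dense rainbow subgraph (dense because an extremal witness has only $O(a)$ edges, so its complement has near-complete minimum degree on all but $O(\sqrt a)$ vertices). To make each cross edge create a rainbow copy, I would embed $H$ so that the added cross edge plays the role of a chosen edge $uv\in E(H)$, routing the remaining vertices of $H$ into $A$, into $B$, and onto already-connected cross edges. When $\delta(H)=1$ this is trivial: map the degree-one vertex onto the $B$-endpoint and everything else into $A$, so no other cross edge is needed and the copy is fresh. When $\delta(H)\ge 2$ one must bootstrap: I would use the connector to seed one special column $b^{\ast}\in B$ with a constant number of cross edges, fill that column one edge at a time (each completion using a single-vertex-in-$B$ template together with previously added cross edges at $b^{\ast}$), and then propagate to further columns via templates placing several vertices of $H$ onto already fully connected $B$-columns, until every remaining column can be completed by a single new cross edge.

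The main technical obstacle is verifying the rainbow condition in Phase~3 against an adversarial color list, since the bootstrapping copies necessarily use a bounded number of non-fresh edges (the $O(1)$ connector edges and the $O(a)$ original edges of $G_1,G_2$). I would handle this using embedding freedom: the connector carries only $C$ distinct colors, so at most $C$ fresh edges can share a color with a connector edge, and I choose the images of the ``free'' vertices of $H$ among the many high-degree vertices of the dense rainbow parts so as to avoid both these finitely many conflicting fresh edges and all original $G_i$-edges. Keeping a spare connector edge lets me dodge a conflict even when the newly added edge is itself colored like a connector edge. A careful but finite case analysis, driven only by $\Delta(H)$, then shows a constant-size connector suffices, which completes the construction and hence the near-subadditivity.
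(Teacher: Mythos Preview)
Your high-level strategy---prove approximate subadditivity of $\rwsat(\cdot,H)$ by gluing extremal witnesses $G_1,G_2$ with a bounded connector and invoke Fekete---is exactly the paper's, and Phases~1 and~2 work for the reason you give.

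The substantive gap is in Phase~3. Your ``fresh-only'' templates require embedding $H$ minus a vertex into $\overline{G_1}$ with the $A$-endpoint $a$ of the new cross edge held fixed. But an extremal witness $G_1$ (which has $O(a)$ edges) can still contain a vertex $a_0$ of degree close to $a-1$; then $a_0$ has essentially no fresh incidences inside $A$, and your single-column bootstrap offers no template for the edge $a_0 b^{\ast}$. A star-shaped connector to one $b^{\ast}$ cannot repair this: you would need connector edges attached to every such $a_0$, and you would further need the $A$-side connector endpoints to lie in an independent set of $G_1$ (else the remainder of the embedding cannot stay fresh). Neither is arranged in your construction, and the promised ``finite case analysis driven by $\Delta(H)$'' does not address this structural obstruction---it is about the shape of $H$, not about pathological degree profiles in $G_1,G_2$.

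The paper supplies precisely the two missing ingredients. First (Lemma~\ref{le:rwsat_property}), it recolors $G_1,G_2$ to be rainbow with disjoint color sets, so that original $G_i$-edges can legitimately appear in Phase-3 templates when a forced endpoint has high $G_i$-degree. Second, the connector is not a star but a complete bipartite graph between $X_1\cup A_1$ and $X_2\cup A_2$, where $X_i$ is the $O(1)$-size set of high-degree vertices of $G_i$ and $A_i$ is a constant-size independent set of $G_i$ disjoint from $X_i$. Phase~3 then runs through four ordered stages (Claims~\ref{cl:lim-1}--\ref{cl:lim-4}), each reducing to Corollary~\ref{cor:add} applied to a suitable rainbow clique plus rainbow bipartite piece. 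Your column-propagation picture is in the right spirit, but the specific placement of the connector and the recoloring lemma are the missing content rather than routine bookkeeping.
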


In the special case that $H$ is a complete graph, Behague et al. \cite{BJLMO} proved that $\rwsat(n, K_t)\leq (t+2\sqrt{2t})n+c_t$ for $t\geq 3$, where $c_t$ is a constant depending only on $t$. They asked whether $\rwsat(n, K_t)\leq tn+O(1)$ holds for every integer $t\geq 3$ and sufficiently large $n$. This question was solved by Chakraborti, Hendrey, Lund and Tompkins \cite{CHLT} recently by showing that $\rwsat(n, K_t)\leq (t-1)n+O(1)$ holds for every integer $t\geq 3$. Our second result extends this result form complete graphs to general graphs. In particular, our result implies that the limit $\lim_{n\to \infty} \frac{\rwsat(n, H)}{n}$ is nonzero if and only if $H$ contains no pendant edges. An edge is \emph{pendant} if one of its endpoints has degree 1. For any graph $H$ and vertex $v\in V(H)$, let $N_H(v)$ be the neighborhood of $v$ in $H$, and let $d_H(v)\colonequals |N_H(v)|$ be the degree of $v$. Let $\delta(H)$ be the minimum degree of $H$ and let $\delta'(H)\colonequals \min\{d_H(v)\colon\, v\in V(H), d_H(v)\neq 0\}$.

\begin{theorem}\label{th:rwsat_limits0}
Let $H$ be a non-empty graph. Then the following statements hold.
\begin{itemize}
\item[{\rm (i)}] If $H$ contains a pendant edge, then $\lim_{n\to \infty} \frac{\rwsat(n, H)}{n}=0$.
\item[{\rm (ii)}] If $H$ contains no pendant edge, then $\frac{1}{2}\delta'(H)\leq \lim_{n\to \infty} \frac{\rwsat(n, H)}{n}\leq \delta'(H)$.
\end{itemize}
\end{theorem}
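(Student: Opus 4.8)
The plan is to prove the lower bound of (ii), the upper bound of (ii), and statement (i) separately; throughout I write $\delta'=\delta'(H)$ and note that $H$ having a pendant edge is equivalent to $\delta'=1$, so (i) is the case $\delta'=1$ and (ii) the case $\delta'\ge 2$. In each part I would combine an estimate valid for all large $n$ with the existence of the limit (Theorem~\ref{th:rwsat_limits}).

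\smallskip\noindent\textbf{The lower bound $\tfrac12\delta'\le\lim$.} This is the part I am most confident about, and I would prove the clean pointwise statement that \emph{every} vertex of a weakly $H$-rainbow saturated graph $G$ has degree at least $\delta'$ in $G$; the handshake identity then gives $e(G)\ge\tfrac12\delta' n$, whence $\lim\ge\tfrac12\delta'$. Fix the guaranteed ordering $e_1,\dots,e_m$ of $E(\overline G)$ and a vertex $w$, and suppose for contradiction that $d_G(w)\le\delta'-1$. Let $e_i=\{w,z\}$ be the first non-edge in the ordering incident to $w$ (if there is none, $w$ is dominating and $d_G(w)=n-1\ge\delta'$). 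I would then choose a convenient colour list: if $d_G(w)\ge1$, colour $e_i$ with a colour already appearing on some $G$-edge at $w$ and give all other non-edges fresh, pairwise distinct colours; if $d_G(w)=0$ any colours will do. Since $G$ is weakly $H$-rainbow saturated, for this list $e_i$ must create a rainbow copy of $H$. As $e_i$ is the \emph{first} added edge at $w$, at that moment the edges at $w$ are exactly $e_i$ together with the $d_G(w)$ original edges at $w$; when $d_G(w)\ge 1$ two of these share a colour, and when $d_G(w)=0$ there is only $e_i$. In either case the degree of $w$ in any rainbow copy through $e_i$ is at most $\max\{d_G(w),1\}\le\delta'-1$, whereas $w$, being incident to an edge of the copy, must have degree at least $\delta'$ there — a contradiction.

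\smallskip\noindent\textbf{Statement (i): a pendant edge forces $\lim=0$.} Here I would exhibit weakly $H$-rainbow saturated graphs with only $O(1)$ edges. Fix a pendant vertex $u$ with neighbour $v$ and set $H'=H-u$, so that a rainbow $H$ is obtained from a rainbow $H'$ by attaching a new pendant edge at the image of $v$, as long as the new colour avoids the constantly many colours of that $H'$. I would take $G$ to be a fixed core on $O(1)$ vertices carrying several colour-disjoint rainbow copies of $H'$ (so that any single forbidden colour destroys at most one of them), together with $n-O(1)$ isolated vertices. The non-edges are then added in a breadth-first order that enriches the graph outward from the core: a fresh vertex is first joined to already-enriched vertices one edge at a time, each such edge playing the pendant $uv$ with the fresh vertex as $u$, until the fresh vertex itself becomes the $v$-image of several colour-disjoint rainbow $H'$ copies and can thereafter host pendants. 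Every added edge thus completes a rainbow $H$ as a pendant extension, and the colour-disjoint redundancy defeats the adversarial colouring. The point to verify is that enrichment can always be arranged to maintain enough colour-disjoint copies at each host, which is exactly where the flexibility of pendant edges is used.

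\smallskip\noindent\textbf{The upper bound $\lim\le\delta'$.} Here I would generalise the clique construction of Chakraborti, Hendrey, Lund and Tompkins, which attains $(t-1)n=\delta'(K_t)\,n$. Fix $v_0$ with $d_H(v_0)=\delta'$. The target is a graph $G$ with $e(G)\le\delta' n+O(1)$ in which essentially every vertex has degree about $\delta'$ and plays the role of $v_0$, together with a bounded core hosting a colour-rich supply of copies of $H-v_0$ and of $N_H(v_0)$. When a non-edge is added it should play an edge incident to a copy of $v_0$ and complete a rainbow $H$; degree $\delta'$ (rather than $\delta'-1$) suffices precisely because it provides the one edge of slack needed to steer around the adversarial colour of the new edge, mirroring the use of $t-1$ apices instead of $t-2$ for cliques. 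The hard part — and the main obstacle of the whole theorem — is that for a general $H$ there is no clique symmetry, so it is delicate to guarantee that \emph{every} non-edge, in particular one joining two vertices of degree about $\delta'$, completes a rainbow copy; to handle these one must exploit the previously added edges, which forces a careful choice of the core, of which roles the generic vertices play, and of the order in which the non-edges are added. Combining the three parts with Theorem~\ref{th:rwsat_limits} yields both statements.
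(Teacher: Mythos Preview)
Your lower bound for (ii) is correct and matches the paper's argument essentially line for line: find a vertex $w$ of degree at most $\delta'-1$, look at the first non-edge at $w$ in the saturating order, and colour it with a colour already present at $w$ (or, when $d_G(w)=0$, simply note that degree $1<\delta'$); then the rainbow degree of $w$ is too small to accommodate a non-isolated vertex of $H$.

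The remaining two parts, however, are only plans, and in each you stop exactly at the step the paper has to work for. In both (i) and the upper bound of (ii) the construction is a bounded ``core'' together with many isolated or degree-$\delta'$ vertices, and the easy non-edges are those with one endpoint anchored in the core; you handle these correctly (pendant extension in (i), the $v_0$-role in (ii)). The hard non-edges are those joining two \emph{non-core} vertices. Here you offer only intentions: in (i) an ``enrichment'' scheme whose colour bookkeeping is never carried out (how do you guarantee colour-disjoint rainbow $H'$-copies at a newly enriched vertex when all its incident colours are adversarial?), and in (ii) the candid admission that this is ``the main obstacle of the whole theorem'', with no resolution.

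The paper's device for both hard cases is the same, and it is independent of the pendant structure: take the core to be a rainbow clique of size $f(H)$ (respectively $f(H)+1$), where $f(H)\le 5|V(H)|$ is chosen via a Tur\'an threshold. After the easy non-edges have been added with pairwise distinct list colours, any two non-core vertices $w,w'$ are each joined to the core by a rainbow star, and together these stars form a rainbow $K_{2,f(H)}$. Lemma~\ref{le:gen_add} then shows that in a clique on $f(H)+2$ vertices whose interior is a rainbow $K_{f(H)}$ and whose two boundary stars form a rainbow $K_{2,f(H)}$, the edge $ww'$ lies in a rainbow copy of $H$ regardless of its colour. The proof is a short extremal count: delete from the interior the at most $2f(H)+1$ edges whose colours collide with the boundary or with $ww'$; by the definition of $f(H)$ what remains still exceeds $\ex(f(H),\{H-\{x,y\}:xy\in E(H)\})$, so it contains some $H-\{x,y\}$, and reinstating $w,w'$ gives the desired rainbow $H$. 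This single lemma closes both (i) and the upper bound of (ii) uniformly and is the concrete ingredient missing from your proposal.
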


The remainder of this paper is organized as follows. In the next section, we introduce some additional terminology and notation, and prove some lemmas that will be used in our proofs of the main results. In Section~\ref{sec:proof}, we will complete our proof of Theorem~\ref{th:rwsat_limits}, and establish Theorem~\ref{th:rwsat_limits0} in a more precise form. Finally, we conclude the paper with some remarks and open problems in Section~\ref{sec:ch-conclu}.

\section{Preliminaries}
\label{sec:pre}

We begin with some additional terminology and notation. Given an edge-colored graph $G$ and an edge $e\in E(G)$, we use $c_G(e)$ to denote the color assigned on $e$. Given two disjoint vertex subsets $U, V\subseteq V(G)$, let $E_G(U, V)\colonequals \{uv\in E(G)\colon\, u\in U, v\in V\}$. If $U$ consists of a single vertex $u$, we simply write $E_G(\{u\}, V)$ as $E_G(u, V)$. The subscript $G$ in $c_G(e)$, $E_G(U, V)$, $N_G(v)$ and $d_G(v)$ will be omitted if $G$ is clear from the context. For a vertex subset $U\subseteq V(G)$, we use $G[U]$ to denote the \emph{edge-colored induced subgraph} of $G$, that is, $V(G[U])=U$, $E(G[U])=\{e\in E(G)\colon\, e\subseteq U\}$, and each edge in $G[U]$ receives the same color as it receives in $G$. For a vertex subset $V\subseteq V(G)$, let $G-V\colonequals G[V(G)\setminus V]$. Given a set $E$ of non-edges (resp., edges) of $G$, let $G+E$ (resp., $G-E$) be the graph obtained form $G$ by adding (resp., deleting) all the edges in $E$. If $E$ consists of a single edge $e$, we simply write $G+\{e\}$ and $G-\{e\}$ as $G+e$ and $G-e$, respectively. Given two vertex-disjoint graphs $G$ and $H$, we use $G\cup H$ to denote the \emph{disjoint union} of $G$ and $H$, that is, the graph with vertex set $V(G\cup H)=V(G)\cup V(H)$ and edge set $E(G\cup H)=E(G)\cup E(H)$. We use $nG$ to denote the disjoint union of $n$ copies of $G$.

Next, we state and prove several lemmas.

\begin{lemma}\label{le:turan_KST}
Let $n\geq m\geq 2$ be two positive integers. Then the following statements hold.
\begin{itemize}
\item[{\rm (i)}] For any constant $c$ with $0\leq c\leq \frac{n-3}{6}$, every $n$-vertex graph with at most $cn$ edges contains an independent set of size $\lceil\frac{n}{2c+1}\rceil$.

\item[{\rm (ii)}] Every subgraph of $K_{m,n}$ with at least $m(n-1)$ edges contains $K_{\lfloor\frac{m}{2}\rfloor,\lfloor\frac{n}{2}\rfloor}$ as a subgraph.
\end{itemize}
\end{lemma}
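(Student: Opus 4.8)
The plan is to prove the two statements separately, treating part (i) as a standard counting argument and part (ii) as a direct application of pigeonhole on a complete bipartite structure.

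\medskip

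\noindent\textbf{Part (i).} I would argue that a sparse graph must contain a large independent set by a greedy or averaging argument. First I would note that if an $n$-vertex graph $G$ has at most $cn$ edges, then its average degree is at most $2c$, so the sum of degrees is at most $2cn$. The natural approach is to invoke the Turán-type bound for the independence number: by the Caro--Wei inequality (or the classical bound $\alpha(G)\geq \frac{n}{\overline{d}+1}$ where $\overline{d}$ is the average degree), we obtain
\begin{equation*}
\alpha(G)\;\geq\;\frac{n}{\overline{d}+1}\;\geq\;\frac{n}{2c+1}.
\end{equation*}
Since $\alpha(G)$ is an integer, this yields $\alpha(G)\geq \lceil \frac{n}{2c+1}\rceil$, which is exactly the claimed independent set size. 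The role of the hypothesis $c\leq \frac{n-3}{6}$ is presumably to ensure that the target size $\lceil\frac{n}{2c+1}\rceil$ is at least some small constant (so the statement is non-vacuous) and that the greedy extraction is valid; I would verify this range constraint at the end rather than worry about it up front.

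\medskip

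\noindent\textbf{Part (ii).} Here the goal is to find a balanced complete bipartite subgraph $K_{\lfloor m/2\rfloor,\lfloor n/2\rfloor}$ inside any dense subgraph of $K_{m,n}$. Let the bipartition classes be $A$ (size $m$) and $B$ (size $n$), and let $F\subseteq K_{m,n}$ have at least $m(n-1)$ edges. The key observation is that the number of \emph{missing} edges is at most $mn-m(n-1)=m$. My plan is to bound, for each vertex $a\in A$, its non-degree (number of non-neighbors in $B$), and show that few vertices of $A$ can have large non-degree. Since the total number of missing edges is at most $m$, at most $\lfloor m/2\rfloor$ vertices of $A$ can each miss $2$ or more edges; equivalently, at least $\lceil m/2\rceil$ vertices of $A$ miss at most one edge each. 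I would then restrict to these low-non-degree vertices and argue that a common set of $\lfloor n/2\rfloor$ neighbors in $B$ survives, again because the total deficiency is controlled.

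\medskip

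\noindent The main obstacle I anticipate is in part (ii): making the two-sided pigeonhole simultaneously give $\lfloor m/2\rfloor$ vertices on the $A$-side and $\lfloor n/2\rfloor$ on the $B$-side. A single averaging step controls one side, but guaranteeing a fully complete bipartite block requires that the chosen $A$-vertices have a large \emph{common} neighborhood, not merely large individual neighborhoods. To handle this I would count incidences between the selected $A$-vertices and their missing $B$-neighbors: after discarding the few high-deficiency vertices from $A$, the residual missing edges number at most $m-2\lfloor m/2\rfloor+(\text{something small})$, and each such missing edge can spoil at most one $B$-vertex from the common neighborhood; choosing the discard thresholds carefully so that fewer than $\lceil n/2\rceil$ vertices of $B$ get spoiled then leaves at least $\lfloor n/2\rfloor$ vertices of $B$ adjacent to all retained $A$-vertices. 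Balancing these thresholds against the edge surplus $m$ is the delicate calculation, and I would set it up as an explicit double-counting inequality rather than relying on a black-box extremal result.
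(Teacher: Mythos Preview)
Your approach to part~(i) is correct and essentially equivalent to the paper's: the paper applies Tur\'an's theorem to the complement $\overline{G}$ to find a clique of size $\lceil n/(2c+1)\rceil$, while you invoke the Caro--Wei / average-degree bound $\alpha(G)\ge n/(\overline{d}+1)$ directly. These are two formulations of the same fact, and your version is arguably cleaner since it does not require the auxiliary inequality chain the paper writes out; the hypothesis $c\le (n-3)/6$ is used in the paper only to make the strict Tur\'an inequality go through, and is not needed for Caro--Wei.

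For part~(ii) you have the right skeleton---count the at most $m$ missing edges, observe that at most $\lfloor m/2\rfloor$ vertices of $A$ can miss two or more edges, and retain a set $A^*\subseteq A$ of $\lfloor m/2\rfloor$ vertices each missing at most one neighbor---and this matches the paper exactly. But your final step is overcomplicated and the expression ``$m-2\lfloor m/2\rfloor+(\text{something small})$'' is not the right count. The clean finish (which is what the paper does) is simply: the $\lfloor m/2\rfloor$ retained vertices in $A^*$ each have at most one non-neighbor in $B$, so deleting those at most $\lfloor m/2\rfloor\le m/2\le n/2$ vertices from $B$ leaves a set $B^*$ of size at least $n/2\ge \lfloor n/2\rfloor$ that is complete to $A^*$. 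No delicate double-counting or threshold-balancing is needed; the bound on residual missing edges is just $|A^*|$, not the expression you wrote.
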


\begin{proof} (i) Let $G$ be an $n$-vertex graph with $|E(G)|\leq cn$. Then the number of edges in the complement $\overline{G}$ of $G$ satisfies
$$\left|E\left(\overline{G}\right)\right|\geq {n\choose 2}-cn= \frac{n-2c-1}{n}\cdot \frac{n^2}{2}= \left(1-\frac{1}{n/(2c+1)}\right)\frac{n^2}{2} > \left(1-\frac{1}{\lceil n/(2c+1)\rceil -1}\right)\frac{n^2}{2}.$$
By Tur\'{a}n's Theorem, $\overline{G}$ contains a complete subgraph of order $\lceil\frac{n}{2c+1}\rceil$, and thus $G$ contains an independent set of size $\lceil\frac{n}{2c+1}\rceil$.

(ii) Let $G$ be a subgraph of $K_{m,n}$ with bipartition $(A,B)$ such that $|A|=m$, $|B|=n$ and $|E(G)|\geq m(n-1)$. Let $A'\colonequals \{v\in A\colon\, d(v)\leq n-2\}$ and $A''=A\setminus A'$. Then $|A'|\leq \frac{1}{2}(mn-m(n-1))=\frac{m}{2}$ and $|A''|\geq m-|A'|\geq \frac{m}{2}$. Take an arbitrary subset $A^{\ast}\subset A''$ with $|A^{\ast}|=\lfloor\frac{m}{2}\rfloor$. Note that every vertex $v\in A^{\ast}$ has at least $n-1$ neighbors in $B$. Hence, there exists a subset $B^{\ast}\subseteq B$ with $|B^{\ast}|\geq n-|A^{\ast}|\geq n-\frac{m}{2}\geq n-\frac{n}{2}=\frac{n}{2}$ such that $A^{\ast}\cup B^{\ast}$ induces a complete bipartite subgraph of $G$. The result follows.
\end{proof}

Given a family $\mathscr{F}$ of graphs, let $\ex(n, \mathscr{F})$ be the \emph{Tur\'{a}n number} of $\mathscr{F}$, that is, the maximum number of edges in an $n$-vertex graph that contains no members of $\mathscr{F}$. For any graph $H$, let $f(H)$ be the smallest integer $n$ such that for each $N\in \{n-1, n\}$ we have $\ex(N, \mathscr{H})\leq {N\choose 2}-2N-2$, where $\mathscr{H}\colonequals \{H-\{u, v\}\colon\, uv\in E(H)\}$. Note that $|V(H)|-1\leq f(H)\leq 5|V(H)|$ (for the upper bound, see the proof of Corollary~\ref{cor:add} below).

\begin{lemma}\label{le:gen_add}
Let $H$ be a graph with $E(H)\neq \emptyset$, $F$ be a complete graph of order $f(H)+2$, and $u,v$ be two distinct vertices of $F$. We color the edges of $F$ such that $F-\{u,v\}$ is rainbow, and all the edges between $\{u,v\}$ and $V(F)\setminus \{u,v\}$ form a rainbow copy of $K_{2, n-2}$. Then no matter what color is assigned on $uv$, there is a rainbow copy of $H$ containing the edge $uv$.
\end{lemma}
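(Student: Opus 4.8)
The plan is to locate, for the given adversarial color $c^{*}$ assigned to $uv$, an edge $ab\in E(H)$ together with an embedding of $H-\{a,b\}$ into $V(F)\setminus\{u,v\}$ that avoids a small set of ``forbidden'' internal edges, and then to set $a\mapsto u$ and $b\mapsto v$; since $F$ is complete this yields a copy of $H$ through $uv$, and the avoidance conditions will force it to be rainbow. Throughout write $N:=f(H)$, so that $F-\{u,v\}$ is a rainbow $K_{N}$, and note that any copy of $H$ of the above shape has its edge set partitioned into $uv$, internal edges lying in $F-\{u,v\}$, and bipartite edges joining $\{u,v\}$ to $V(F)\setminus\{u,v\}$. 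Because $F-\{u,v\}$ and the bipartite $K_{2,N}$ are each rainbow, such a copy can fail to be rainbow only through a color coincidence of one of three types: (i) an internal image edge with $c^{*}$; (ii) an internal image edge with a bipartite image edge; or (iii) a bipartite image edge with $c^{*}$.

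First I would dispose of type (iii). Since the bipartite part is rainbow, at most one bipartite edge has color $c^{*}$; let $W\subseteq V(F)\setminus\{u,v\}$ be the set of its endpoints other than $u,v$, so $|W|\le 1$. Reserving these vertices, put $Y:=V(F)\setminus(\{u,v\}\cup W)$, so that $|Y|=N-|W|\in\{f(H)-1,\,f(H)\}$. If the embedding of $H-\{a,b\}$ uses only vertices of $Y$, then no bipartite image edge can have color $c^{*}$, settling (iii).

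Next I would bound the edges to be forbidden for (i) and (ii). Call an edge of the complete graph on $Y$ \emph{bad} if its color equals $c^{*}$ or equals the color of one of the $2|Y|$ bipartite edges joining $\{u,v\}$ to $Y$. As $F-\{u,v\}$ is rainbow, at most one edge inside $Y$ has color $c^{*}$, and each of the $2|Y|$ relevant bipartite colors is shared by at most one edge inside $Y$; hence there are at most $2|Y|+1$ bad edges. Deleting them from the complete graph on $Y$ leaves more than $\binom{|Y|}{2}-2|Y|-2\ge \ex(|Y|,\mathscr{H})$ edges, where the inequality is exactly the defining property of $f(H)$, applicable since $|Y|\in\{f(H)-1,f(H)\}$. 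Therefore the remaining (all-good) graph on $Y$ contains a member of $\mathscr{H}$, that is, a copy of $H-\{a,b\}$ for some $ab\in E(H)$, whose edges are all good.

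Finally I would assemble the copy by adjoining $a\mapsto u$ and $b\mapsto v$ to this located copy of $H-\{a,b\}$ on $Y$; completeness of $F$ guarantees that every edge of $H$ is present. Its internal edges are good, hence pairwise distinct (rainbow $K_{N}$), distinct from $c^{*}$, and distinct from every bipartite color, so in particular from the colors of the bipartite image edges; the bipartite image edges are pairwise distinct and, since $Y$ avoids $W$, distinct from $c^{*}$; and $uv$ alone carries $c^{*}$. Thus all image edges receive distinct colors, producing a rainbow $H$ through $uv$. The crux, and the only delicate point, is the counting in the two middle steps: correctly isolating the three failure modes and verifying that the number of forbidden internal edges never exceeds $2|Y|+1$, which is precisely what the slack $2N+2$ and the two-value ($f(H)-1$ and $f(H)$) formulation in the definition of $f(H)$ are designed to absorb.
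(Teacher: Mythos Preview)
Your proof is correct and follows essentially the same approach as the paper's: remove at most one vertex so that no bipartite edge into the remaining set $Y$ carries the color $c^{*}$, delete from the rainbow clique on $Y$ the at most $2|Y|+1$ internal edges whose colors coincide with $c^{*}$ or with a bipartite edge color, invoke the definition of $f(H)$ to locate a copy of some $H-\{a,b\}$ in what remains, and map $a,b$ to $u,v$. Your explicit trichotomy of failure modes (i)--(iii) makes the argument slightly more transparent than the paper's compressed version, but the substance and the counting are identical.
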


\begin{proof} The result holds trivially if $|V(H)|\leq 2$, so we may assume that $|V(H)|\geq 3$ in the following argument. Since $E(\{u,v\}, V(F)\setminus \{u,v\})$ forms a rainbow subgraph, we can remove at most one vertex from $V(F)\setminus \{u, v\}$ to get a subset $V\subseteq V(F)\setminus \{u, v\}$ such that $f(H)-1\leq |V|\leq f(H)$ and $E(\{u,v\},V)$ contains no edges of the color $c(uv)$. Let $F'$ be the subgraph of $F[V]$ consisting of all its edges using colors from $\mathbb{N}\setminus (\{c(e)\colon\, e\in E(\{u, v\}, V)\}\cup \{c(uv)\})$. Since $F-\{u,v\}$ is rainbow and $f(H)-1\leq |V|\leq f(H)$, we have $|E(F')|\geq {|V|\choose 2}-2|V|-1> \ex(|V|, \mathscr{H})$. This implies that $F'$ contains a copy $H^{\ast}$ of $H-\{x,y\}$ for some edge $xy\in E(H)$. Note that $H^{\ast}$ is rainbow and contains no edges using colors from $\{c(e)\colon\, e\in E(\{u, v\}, V)\}\cup \{c(uv)\}$. This implies that $F[V(H^{\ast})\cup \{u,v\}]$ contains a rainbow copy of $H$ containing the edge $uv$. The result follows.
\end{proof}

\begin{corollary}\label{cor:add}
Let $H$ be a graph with $E(H)\neq \emptyset$, $F$ be a copy of $K_n$ with $n\geq 5|V(H)|$, and $u,v$ be two distinct vertices of $F$. We color the edges of $F$ such that $F-\{u,v\}$ is rainbow, and all the edges between $\{u,v\}$ and $V(F)\setminus \{u,v\}$ form a rainbow copy of $K_{2, n-2}$. Then no matter what color is assigned on $uv$, there is a rainbow copy of $H$ containing the edge $uv$.
\end{corollary}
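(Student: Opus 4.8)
The plan is to reduce Corollary~\ref{cor:add} directly to Lemma~\ref{le:gen_add}. The key observation is that Lemma~\ref{le:gen_add} already establishes the desired conclusion for a complete graph on exactly $f(H)+2$ vertices with the prescribed coloring, so the only task is to locate a suitably colored complete subgraph on $f(H)+2$ vertices inside the given $F$ that contains both $u$ and $v$. Since we are told that $F-\{u,v\}$ is rainbow and that the edges between $\{u,v\}$ and $V(F)\setminus\{u,v\}$ form a rainbow copy of $K_{2,n-2}$, any vertex subset of the form $S\cup\{u,v\}$ with $S\subseteq V(F)\setminus\{u,v\}$ inherits exactly these structural properties: $F[S]$ is rainbow (being a subgraph of the rainbow $F-\{u,v\}$), and the edges between $\{u,v\}$ and $S$ form a rainbow $K_{2,|S|}$.

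First I would invoke the bound $f(H)\leq 5|V(H)|$, which is the content of the inequality the corollary's hypothesis is designed to accommodate. Since $n\geq 5|V(H)|\geq f(H)$, we certainly have $n\geq f(H)$, and hence $|V(F)\setminus\{u,v\}|=n-2\geq f(H)$. Actually I want the induced complete subgraph to have exactly $f(H)+2$ vertices, so I need $n-2\geq f(H)$, i.e. $n\geq f(H)+2$; this requires confirming that $5|V(H)|\geq f(H)+2$, which holds for any non-empty $H$ since $|V(H)|\geq 2$ forces $5|V(H)|\geq f(H)+2$ once $f(H)\leq 5|V(H)|$ is combined with $|V(H)|\geq 2$. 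I would then arbitrarily choose a subset $S\subseteq V(F)\setminus\{u,v\}$ with $|S|=f(H)$ and set $F^{\ast}\colonequals F[S\cup\{u,v\}]$.

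Next I would verify that $F^{\ast}$, together with its inherited coloring, satisfies the hypotheses of Lemma~\ref{le:gen_add}: it is a complete graph of order $f(H)+2$, the subgraph $F^{\ast}-\{u,v\}=F[S]$ is rainbow, and the edges between $\{u,v\}$ and $S$ form a rainbow copy of $K_{2,f(H)}$. Note that the statement of Lemma~\ref{le:gen_add} writes this complete bipartite graph as $K_{2,n-2}$, but there $n$ denotes the order of that lemma's host graph $F$, so in our application the relevant value is $(f(H)+2)-2=f(H)$, which is consistent. Applying Lemma~\ref{le:gen_add} to $F^{\ast}$ with the same pair $u,v$ yields, for whatever color is assigned to $uv$, a rainbow copy of $H$ in $F^{\ast}$ containing $uv$; since $F^{\ast}$ is an induced subgraph of $F$ with its coloring, this is also a rainbow copy of $H$ in $F$ containing $uv$.

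The main point to take care of is purely bookkeeping: ensuring the arithmetic $n\geq 5|V(H)|$ indeed guarantees $n-2\geq f(H)$ so that a subset $S$ of the required size exists, and checking that the coloring restrictions transfer cleanly to the induced subgraph. There is no genuine obstacle here, since the colorings of $F^{\ast}-\{u,v\}$ and of $E_F(\{u,v\},S)$ are just restrictions of rainbow colorings and hence remain rainbow. I would also record, for the earlier remark $f(H)\leq 5|V(H)|$ promised in the text preceding Lemma~\ref{le:gen_add}, the standard estimate that any graph on $N$ vertices with more than $\binom{N}{2}-2N-2$ edges has fewer than $2N+2$ non-edges and therefore contains a copy of $H-\{u,v\}$ for some edge $uv$ once $N$ is a modest multiple of $|V(H)|$; this justifies taking $f(H)\leq 5|V(H)|$ and closes the only non-trivial quantitative claim.
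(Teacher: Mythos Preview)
Your proposal is correct and follows essentially the same route as the paper: establish $f(H)\le 5|V(H)|$ via a Tur\'an/independent-set count in the complement, then pick any $S\subseteq V(F)\setminus\{u,v\}$ of size $f(H)$ and apply Lemma~\ref{le:gen_add} to $F[S\cup\{u,v\}]$. One minor quibble: your sentence ``$|V(H)|\ge 2$ forces $5|V(H)|\ge f(H)+2$ once $f(H)\le 5|V(H)|$ is combined with $|V(H)|\ge 2$'' is not a valid deduction as written (those two inequalities alone do not yield $f(H)+2\le 5|V(H)|$); the paper simply does not address this point, and in practice the Tur\'an computation has enough slack to give $f(H)\le 5|V(H)|-2$ as well, so the issue is cosmetic rather than substantive.
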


\begin{proof} We first show that $f(H)\leq 5|V(H)|$. To this end, let $G$ be a graph on $N\geq 5|V(H)|-1$ vertices with $|E(G)|={N\choose 2}-2N-2$. Then $|E\left(\overline{G}\right)|=2N+2$. Applying Lemma~\ref{le:turan_KST} (i) with $c=\frac{2N+2}{N}$ to $\overline{G}$, we can find an independent set $U$ of $\overline{G}$ with $|U|\geq \frac{N}{2c+1}\geq |V(H)|-2$. This implies that $G[U]$ is a complete subgraph on at least $|V(H)|-2$ vertices, so $G$ contains some graph $H^{\ast}\in \mathscr{H}$. This implies that $f(H)\leq 5|V(H)|$. Let $V\subseteq V(F)\setminus \{u,v\}$ with $|V|=f(H)$. The result follows by applying Lemma~\ref{le:gen_add} to $F[V\cup \{u,v\}]$.
\end{proof}

Recall that for a weakly $H$-rainbow saturated graph $G$, we do not require $G$ itself to be rainbow $H$-free. We have the following result on weakly rainbow saturated graphs.

\begin{lemma}\label{le:rwsat_property}
For any graph $H$, integer $n$, and weakly $H$-rainbow saturated graph $G$ on $n$ vertices, we can recolor the edges of $G$ such that the resulting edge-colored graph $G'$ is rainbow and $G'$ is still weakly $H$-rainbow saturated.
\end{lemma}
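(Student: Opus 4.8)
The plan is to take the ordering $e_1,\dots,e_m$ of $E(\overline G)$ witnessing that the given colored graph $(G,c)$ is weakly $H$-rainbow saturated, to recolor $E(G)$ by an \emph{arbitrary} injective coloring $c'$ (so that $(G,c')$ is rainbow), and to prove that $(G,c')$ is weakly $H$-rainbow saturated with respect to this \emph{same} ordering. Fix any list $\alpha=(c_1,\dots,c_m)$ of pairwise distinct colors assigned to the non-edges. Because $c'$ is injective on $E(G)$ and $\alpha$ is injective on $E(\overline G)$, in the graph $G_i:=G+\{e_1,\dots,e_i\}$ the only possible coincidence of combined colors is between an original edge $f$ and an added edge $e_j$ with $c'(f)=\alpha(e_j)$; the set $M$ of all such pairs $(e_j,f)$ is a matching between added and original edges. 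Hence a copy of $H$ through $e_i$ in $G_i$ is rainbow under $(c',\alpha)$ precisely when it contains no pair of $M$, and the whole statement reduces to the following color-free claim: \emph{for every $i$ and every matching $M$ between $\{e_1,\dots,e_i\}$ and $E(G)$, there is a copy of $H$ through $e_i$ in $G_i$ that uses no pair of $M$.}

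To extract such copies from the hypothesis I would feed $(G,c)$ a suitable distinct-color list $\beta$ on the non-edges, chosen so that the partition of $E(G_i)$ into color classes determined by $(c',\alpha)$ \emph{refines} the one determined by $(c,\beta)$. Refinement guarantees that any copy returned by the hypothesis as rainbow under $(c,\beta)$ has its edges in distinct $(c',\alpha)$-classes as well, i.e.\ is rainbow under $(c',\alpha)$ and therefore avoids $M$. Since $c'$ is rainbow, all original edges already lie in distinct $(c',\alpha)$-classes, so refinement is equivalent to the demand that each matched pair $(e_j,f)\in M$ lie in a common $(c,\beta)$-class, that is, $\beta(e_j)=c(f)$.

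The step I expect to be the crux is ensuring that this $\beta$ is again a legitimate adversary, i.e.\ that it uses pairwise distinct colors. The prescription $\beta(e_j)=c(f)$ collides exactly when $\alpha$ matches two distinct original edges of the same $c$-color to two different added edges. To handle this I would use the fact that every copy the hypothesis returns is rainbow under $(c,\beta)$, hence $c$-rainbow, and so meets each $c$-color class in at most one edge; thus within any returned copy at most one matched pair per color class can be simultaneously present. This permits choosing, for each $c$-color class, a single \emph{representative} matched edge to carry the class color in $\beta$ (the other matched edges of that class receiving fresh colors), and it lets one use the ever-present new edge $e_i$ to knock out an entire problematic class: taking $\beta(e_i)$ equal to that class color forces the returned copy to omit all originals of that color (this is compatible with the refinement at $e_i$ precisely when the class is $e_i$'s own matched class, or when $e_i$ is unmatched). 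The remaining difficulty is to discharge several heavy color classes at once, since a fixed choice of representatives may leave one residual matched pair inside the returned copy; I would resolve this by iterating the hypothesis, repeatedly re-selecting the representative along any residual bad pair until a copy avoiding all of $M$ is produced. Granting the color-free claim for every $i$ and $M$, adding $e_1,\dots,e_m$ in order with the colors $\alpha$ creates a new rainbow copy of $H$ at every step, so $(G,c')$ is weakly $H$-rainbow saturated, as desired.
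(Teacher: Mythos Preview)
Your overall strategy coincides with the paper's: keep the same ordering $e_1,\dots,e_m$, recolor $G$ by an arbitrary injection $c'$, and for a given adversarial list $\alpha$ manufacture a list $\beta$ to feed the original hypothesis so that a $(c,\beta)$-rainbow copy is automatically $(c',\alpha)$-rainbow. The paper runs the same idea by contradiction: assuming step $j$ fails under $(c',\alpha)$, every underlying copy $A$ through $e_j$ contains some pair $(e_{i_\ell},e'_{i_\ell})$ from your matching $M$, and the paper then sets $c_{i_\ell}=c_G(e'_{i_\ell})$ and declares that this list defeats the hypothesis on $(G,c)$. That is exactly your prescription $\beta(e_j)=c(f_j)$ on matched edges, so the reduction and the intended $\beta$ are identical in the two arguments.

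Where you diverge from the paper is in explicitly confronting the distinctness of $\beta$: when two matched originals $f_j,f_{j'}$ share a $c$-colour, the prescription forces $\beta(e_j)=\beta(e_{j'})$. The paper simply asserts the existence of ``any list of pairwise distinct colours with $c_{i_\ell}=c_G(e'_{i_\ell})$'' without addressing this, so you are right that this is the crux. However, your proposed remedy does not close the gap. Choosing one representative per $c$-colour class and using $\beta(e_i)$ to wipe out a single heavy class still leaves all remaining heavy classes with non-representative pairs unblocked; and the ``iteration'' you describe---re-selecting the representative to be whatever bad pair appears in the returned copy---has no termination argument. Concretely, with two pairs $(e_1,f_1),(e_2,f_2)$ in the same $c$-class, the hypothesis may hand you a copy containing $(e_2,f_2)$ when $e_1$ is the representative and a copy containing $(e_1,f_1)$ when $e_2$ is, so the process can cycle indefinitely. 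Thus your proposal identifies the right difficulty but the sketch of a fix is not a proof; as written, it has the same gap as the paper's argument, only made explicit.
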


\begin{proof} Since $G$ is a weakly $H$-rainbow saturated graph, there exists an ordering $e_1, e_2, \ldots, e_m$ of $E\left(\overline{G}\right)$ such that, for any list $c_1, c_2, \ldots, c_m$ of pairwise distinct colors from $\mathbb{N}$, the non-edges $e_i$ in color $c_i$ can be added to $G$, one at a time, so that every added edge creates a new rainbow copy of $H$. Now we consider the rainbow graph $G'$. For an arbitrarily fixed list $c'_1, c'_2, \ldots, c'_m$ of pairwise distinct colors from $\mathbb{N}$, we wish to show that the non-edges $e_i$ in color $c'_i$ can be added to $G'$, one at a time, so that every added edge creates a new rainbow copy of $H$.

Let $G'_0\colonequals G'$, and for each $i\in [m]$ let $G'_i\colonequals G'+\{e_1, \ldots, e_i\}$ with $c(e_{k})=c'_k$ for every $k\in [i]$. Suppose for some $j\in [m]$, the non-edges $e_i$ (for each $i<j$) in color $c'_i$ can be added to $G'_{i-1}$ so that every added edge creates a new rainbow copy of $H$, but adding $e_j$ in color $c'_j$ to $G'_{j-1}$ does not create any new rainbow copy of $H$. Since $G$ is weakly $H$-rainbow saturated, the addition of $e_j$ creates at least one copy of $H$. Let $\mathcal{A}$ be the set of all underlying copies\footnote{For an edge-colored graph $F$, the underlying copy of $F$ is an uncolored graph consisting of the vertex set $V(F)$ and edge set $E(F)$.} of $H$ in $G+\{e_1, \ldots, e_j\}$ containing $e_j$. Then for any $A\in \mathcal{A}$, $A$ is a copy of $H$ but $A$ is not a rainbow subgraph of $G'_j$. Since $G'$ is rainbow and $c'_1, \ldots, c'_j$ are pairwise distinct, there exists at most one edge $e'_i$ in $G'$ with $c_{G'}(e'_i)=c'_i$ for each $i\in [j]$. For each $i\in [j]$, let $\mathcal{A}_{i}\colonequals \{A\in  \mathcal{A}\colon\, \mbox{$G'$ contains a unique edge $e'_i$ with $c_{G'}(e'_i)=c'_i$ and $e_i, e'_i\in E(A)$}\}$. Let $i_1, \ldots, i_t$ be all the indices such that $\mathcal{A}_{i_{\ell}}\neq \emptyset$ for each $\ell \in [t]$.  Note that $\mathcal{A}_{i_1}, \ldots, \mathcal{A}_{i_t}$ form a partition of $\mathcal{A}$. Then for any list $c_1, c_2, \ldots, c_m$ of pairwise distinct colors from $\mathbb{N}$ with $c_{i_{\ell}}=c_G(e'_{i_{\ell}})$ for each $\ell \in [t]$, we cannot add the non-edges $e_i$ in color $c_i$ to $G$, one at a time, so that every added edge creates a new rainbow copy of $H$. This contradiction completes the proof of Lemma~\ref{le:rwsat_property}.
\end{proof}

We shall also use the following version of Fekete's Subadditive Lemma.

\begin{lemma}[\cite{FuRu}]\label{le:Fekete}
Let $c$ and $t$ be two positive constants. For any sequence $\left\{a_{n}\right\}_{n\in \mathbb{N}}$ with $a_{m+n}\leq a_m+a_n+c$ for every $m,n\geq t$, the limit $\lim_{n\to \infty}\frac{a_{n}}{n}$ exists.
\end{lemma}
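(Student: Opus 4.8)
The plan is to absorb the additive error $c$ into the sequence so that the perturbed inequality becomes genuine subadditivity, and then to run the classical Fekete argument while keeping all indices above the threshold $t$. First I would set $b_n := a_n + c$. A direct substitution gives, for all $m,n\ge t$,
$$b_{m+n}=a_{m+n}+c\le a_m+a_n+2c=b_m+b_n,$$
so $\{b_n\}$ is exactly subadditive on the range $n\ge t$. Since $\frac{a_n}{n}=\frac{b_n}{n}-\frac{c}{n}$ and $\frac{c}{n}\to 0$, it suffices to prove that $\lim_{n\to\infty}\frac{b_n}{n}$ exists; I would show it equals $L:=\inf_{n\ge t}\frac{b_n}{n}$.

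The heart of the proof is the inequality $\limsup_{n\to\infty}\frac{b_n}{n}\le \frac{b_k}{k}$ for every fixed $k\ge t$. I would obtain it by Euclidean division: write $n=qk+r$ with $q=\lfloor n/k\rfloor$ and $0\le r<k$, then peel off $q$ blocks of size $k$ by iterating $b_{x+k}\le b_x+b_k$. Each step is legitimate precisely because $k\ge t$ and every partial index stays $\ge t$. The only delicate point is the remainder: if $r\ge t$ this goes through directly, giving $b_n\le q\,b_k+b_r$; if instead $r<t$ I would absorb one block by writing $n=(q-1)k+(k+r)$, so that the leftover index $k+r$ lies in the safe range $[t,\,k+t)$. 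In either case the remainder ranges over a fixed finite set of indices (for $k$ fixed), so its contribution is bounded by a constant $C_k$, yielding $b_n\le \lfloor n/k\rfloor\, b_k+C_k$. Dividing by $n$ and letting $n\to\infty$, the coefficient $\frac{\lfloor n/k\rfloor}{n}\to\frac1k$ and $\frac{C_k}{n}\to 0$ (regardless of the sign of $b_k$), which gives $\limsup_{n\to\infty}\frac{b_n}{n}\le\frac{b_k}{k}$.

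Taking the infimum over $k\ge t$ yields $\limsup_{n\to\infty}\frac{b_n}{n}\le L$, while trivially $\liminf_{n\to\infty}\frac{b_n}{n}\ge\inf_{n\ge t}\frac{b_n}{n}=L$; hence the limit exists and equals $L$. (If $L=-\infty$ the same chain forces the limit to be $-\infty$; in the intended application $a_n=\rwsat(n,H)\ge 0$, so $b_n\ge c>0$ and $L\ge 0$ is finite.) Transferring back through $\frac{a_n}{n}=\frac{b_n}{n}-\frac{c}{n}$ then completes the argument.

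The main obstacle, and the only genuine deviation from the textbook subadditive lemma, is the restriction of the hypothesis to indices $\ge t$: I must ensure that no intermediate index produced in the peeling process—and the remainder in particular—ever drops below $t$. This is exactly what the \emph{absorb-one-block-when-$r<t$} maneuver guarantees, and verifying that the resulting remainder term stays in a bounded range (so that $C_k$ is a genuine constant) is the one step that needs to be written out with care.
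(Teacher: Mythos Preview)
Your argument is correct. The paper does not actually supply its own proof of this lemma: it is stated with a citation to F\"uredi--Ruzsa \cite{FuRu} and then used as a black box in the proof of Theorem~\ref{th:rwsat_limits}. So there is no ``paper's proof'' to compare against; what you have written is precisely the standard reduction to the classical Fekete lemma (absorb $c$ into the sequence to get genuine subadditivity for indices $\ge t$, then run Euclidean division while keeping the leftover index above the threshold by merging one extra block when $r<t$). Every step is sound, and your remark that the infimum $L$ may equal $-\infty$ in general but is nonnegative in the intended application is accurate.
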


\section{Proofs of Theorems~\ref{th:rwsat_limits} and \ref{th:rwsat_limits0}}
\label{sec:proof}

We first present our proof of Theorem~\ref{th:rwsat_limits}. Our proof is inspired by the work of Alon in \cite{Alon985JCTA}.
\vspace{0.2cm}

\noindent {\bf Proof of Theorem~\ref{th:rwsat_limits}}~
Let $t=\max\{\lceil c(H)\rceil, |V(H)|, 3\}$, where $c(H)$ is a constant such that $\rsat(n, H)\leq c(H)n$ (guaranteed by the result of \cite{BJLMO}). Then $\rwsat(n, H)\leq \rsat(n, H)\leq tn$. We shall show that for every $m_1, m_2\geq t^{10}$,
\begin{equation}\label{eq:subadd-1}
\rwsat(m_1+m_2, H)\leq \rwsat(m_1, H) + \rwsat(m_2, H) + t^{14}.
\end{equation}

For each $i\in [2]$, let $G_i$ be a weakly $H$-rainbow saturated graph on $m_i\geq t^{12}$ vertices with $|E(G_i)|=\rwsat(m_i, H)$. By Lemma~\ref{le:rwsat_property}, we may further assume that $G_1$ and $G_2$ are two disjoint rainbow graphs and they have no common colors, i.e., $G_1\cup G_2$ is rainbow. For each $i\in [2]$, let $X_i\colonequals \{v\in V(G_i)\colon\, d_{G_i}(v)\geq \frac{m_i}{4}\}$. Note that $|X_i|\leq \frac{2|E(G_i)|}{m_i/4}\leq \frac{2tm_i}{m_i/4}=8t$ for each $i\in [2]$. By Lemma~\ref{le:turan_KST} (i), for each $i\in [2]$, $G_i$ contains an independent set of size at least $\frac{m_i}{2t+1} > t^6+8t$, so we may assume that $A_i\subseteq V(G_i)\setminus X_i$ is an independent set of $G_i$ with $|A_i|=t^6$. Let $G$ be the $(m_1+m_2)$-vertex graph obtained from $G_1\cup G_2$ by adding all edges between $X_1\cup A_1$ and $X_2\cup A_2$, and we color the new edges such that $G$ is rainbow. Note that $|E(G)|=|E(G_1)|+|E(G_2)|+|X_1\cup A_1||X_2\cup A_2|\leq \rwsat(m_1, H) + \rwsat(m_2, H) + t^{14}.$ In order to prove Inequality (\ref{eq:subadd-1}), it suffices to show that $G$ is weakly $H$-rainbow saturated.

Let $a$, $b$ and $s$ be the number of non-edges of $G_1$, $G_2$ and $G$, respectively. Then $s=a+b+m_1m_2-|X_1\cup A_1||X_2\cup A_2|$. We shall show that there exists an ordering $e_1, e_2, \ldots, e_s$ of non-edges of $G$ such that, for any list $c_1, c_2, \ldots, c_s$ of pairwise distinct colors from $\mathbb{N}$, the non-edges $e_i$ in color $c_i$ can be added to $G$, one at a time, so that every added edge creates a new rainbow copy of $H$. Since $G_1$ and $G_2$ are weakly $H$-rainbow saturated, there exists an ordering $e_1, e_2, \ldots, e_a$ of non-edges of $G_1$ and an ordering $e_{a+1}, e_{a+2}, \ldots, e_{a+b}$ of non-edges of $G_2$ such that, the non-edges $e_i$ in color $c_i$ can be added to $G$, one at a time, so that every added edge creates a new rainbow copy of $H$. Let $G^{(1)}=G+\{e_1, e_2, \ldots, e_{a+b}\}$ with $c(e_i)=c_i$ for each $i\in [a+b]$.

We next consider the non-edges between $V(G_1)$ and $V(G_2)$. For each $i\in [2]$, let $B_i\colonequals \{v\in V(G_i)\setminus (X_i\cup A_i)\colon\, |N_{G_i}(v)\cap A_i|\geq t^5\}$ and $C_i\colonequals V(G_i)\setminus (X_i\cup A_i\cup B_i)$. Note that for each $i\in [2]$, we have $|B_i|\leq \frac{|E(G_i)|}{t^5}\leq \frac{tm_i}{t^5}=\frac{m_i}{t^4}$ and $|C_i|\geq m_i-|X_i|-|A_i|-|B_i|\geq \frac{2m_i}{3}$. Roughly speaking, we will consider the remaining non-edges in the following ordering: $E_{\overline{G}}(C_1\cup A_1, C_2\cup A_2)$, $E_{\overline{G}}(B_1, X_2\cup A_2\cup C_2)\cup E_{\overline{G}}(B_2, X_1\cup A_1\cup C_1)$, $E_{\overline{G}}(B_1, B_2)$, $E_{\overline{G}}(C_1, X_2)\cup E_{\overline{G}}(C_2, X_1)$; see Figure~\ref{fig:rwsat}. For convenience, we introduce one more notion. Assume that $G^{\ast}$ is the edge-colored graph obtained from $G$ by adding certain non-edges $e_1, e_2, \ldots, e_{\ell}$ with $c(e_{i})=c_{i}$ for each $i\in [\ell]$. For a subset $E^{\ast}$ of non-edges of $G^{\ast}$, we say that $E^{\ast}$ \emph{is nice to} $G^{\ast}$ if there exists an ordering $e_{\ell+1}, e_{\ell+2}, \ldots, e_{\ell+|E^{\ast}|}$ of the non-edges in $E^{\ast}$ such that, the non-edges $e_{\ell+i}$ in color $c_{\ell+i}$ can be added to $G^{\ast}$, one at a time, so that every added edge creates a new rainbow copy of $H$.

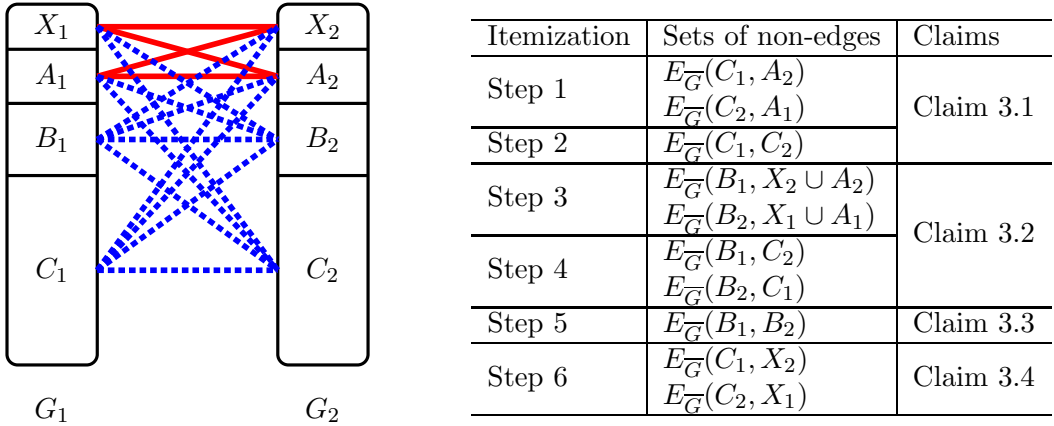
\begin{figure}[htbp]
	\begin{minipage}[p]{0.45\textwidth}
		\centering
		\begin{tikzpicture}[scale=0.06,auto,swap]
		
		\draw[black, rounded corners, very thick] (0,0) rectangle (20,80);
		\draw[very thick] (0,70) -- (20,70); \draw[very thick] (0,58) -- (20,58); \draw[very thick] (0,42) -- (20,42);
		\draw (10,75) node {$X_1$}; \draw (10,64) node {$A_1$}; \draw (10,50) node {$B_1$}; \draw (10,21) node {$C_1$}; \draw (10,-10) node {$G_1$};
		
		\draw[black, rounded corners, very thick] (60,0) rectangle (80,80);
		\draw[very thick] (60,70) -- (80,70); \draw[very thick] (60,58) -- (80,58); \draw[very thick] (60,42) -- (80,42);
		\draw (70,75) node {$X_2$}; \draw (70,64) node {$A_2$}; \draw (70,50) node {$B_2$}; \draw (70,21) node {$C_2$}; \draw (70,-10) node {$G_2$};
		
		\draw[line width=0.75mm, red, opacity=0.5] (20,75) -- (60,75); \draw[line width=0.75mm, red, opacity=0.5] (20,75) -- (60,64);
		\draw[line width=0.75mm, red, opacity=0.5] (20,64) -- (60,75); \draw[line width=0.75mm, red, opacity=0.5] (20,64) -- (60,64);
		
		\draw[dash pattern=on 2.5pt off 1.5pt, line width=0.75mm, blue, opacity=0.4] (20,75) -- (60,50);
		\draw[dash pattern=on 2.5pt off 1.5pt, line width=0.75mm, blue, opacity=0.4] (20,75) -- (60,21);
		\draw[dash pattern=on 2.5pt off 1.5pt, line width=0.75mm, blue, opacity=0.4] (20,64) -- (60,50);
		\draw[dash pattern=on 2.5pt off 1.5pt, line width=0.75mm, blue, opacity=0.4] (20,64) -- (60,21);
		\draw[dash pattern=on 2.5pt off 1.5pt, line width=0.75mm, blue, opacity=0.4] (20,50) -- (60,75);
		\draw[dash pattern=on 2.5pt off 1.5pt, line width=0.75mm, blue, opacity=0.4] (20,50) -- (60,64);
		\draw[dash pattern=on 2.5pt off 1.5pt, line width=0.75mm, blue, opacity=0.4] (20,50) -- (60,50);
		\draw[dash pattern=on 2.5pt off 1.5pt, line width=0.75mm, blue, opacity=0.4] (20,50) -- (60,21);
		\draw[dash pattern=on 2.5pt off 1.5pt, line width=0.75mm, blue, opacity=0.4] (20,21) -- (60,75);
		\draw[dash pattern=on 2.5pt off 1.5pt, line width=0.75mm, blue, opacity=0.4] (20,21) -- (60,64);
		\draw[dash pattern=on 2.5pt off 1.5pt, line width=0.75mm, blue, opacity=0.4] (20,21) -- (60,50);
		\draw[dash pattern=on 2.5pt off 1.5pt, line width=0.75mm, blue, opacity=0.4] (20,21) -- (60,21);
		
		\end{tikzpicture}
	\end{minipage}
	\begin{minipage}[p]{0.55\textwidth}
		\centering
		\begin{tabular}{l|l|l}
			\cline{1-3}
			Itemization & Sets of non-edges & Claims \\  \cline{1-3}
			\multirow{2}{*}{Step 1} & $E_{\overline{G}}(C_1, A_2)$ & \multirow{3}{*}{Claim~\ref{cl:lim-1}} \\
			& $E_{\overline{G}}(C_2, A_1)$ & \\ \cline{1-2}
			Step 2 & $E_{\overline{G}}(C_1, C_2)$ & \\ \cline{1-3}
			\multirow{2}{*}{Step 3} & $E_{\overline{G}}(B_1, X_2\cup A_2)$ & \multirow{4}{*}{Claim~\ref{cl:lim-2}}\\
			& $E_{\overline{G}}(B_2, X_1\cup A_1)$ & \\ \cline{1-2}
			\multirow{2}{*}{Step 4} & $E_{\overline{G}}(B_1, C_2)$ & \\
			& $E_{\overline{G}}(B_2, C_1)$ & \\ \cline{1-3}
			Step 5 & $E_{\overline{G}}(B_1, B_2)$ & Claim~\ref{cl:lim-3}\\ \cline{1-3}
			\multirow{2}{*}{Step 6} & $E_{\overline{G}}(C_1, X_2)$ & \multirow{2}{*}{Claim~\ref{cl:lim-4}}\\
			& $E_{\overline{G}}(C_2, X_1)$ & \\ \cline{1-3}
		\end{tabular}
	\end{minipage}
	\caption{An illustration of the non-edges (represented by dashed lines) of $G^{(1)}$, and the rough ordering of the remaining non-edges that we wish to add to $G^{(1)}$.}
	\label{fig:rwsat}
\end{figure}

\begin{claim}\label{cl:lim-1} $E_{\overline{G}}(C_1\cup A_1, C_2\cup A_2)$ is nice to $G^{(1)}$.
\end{claim}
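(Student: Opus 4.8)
The plan is to verify niceness edge-by-edge: for each non-edge $xy$ in $E_{\overline{G}}(C_1\cup A_1, C_2\cup A_2)$ that we add (in its prescribed colour), I must exhibit a \emph{new} rainbow copy of $H$ through $xy$. Since $E_{\overline{G}}(A_1,A_2)=\emptyset$, this set is $E_{\overline{G}}(C_1,A_2)\cup E_{\overline{G}}(C_2,A_1)$ (Step~1) together with $E_{\overline{G}}(C_1,C_2)$ (Step~2), and I would insert all Step~1 edges before the Step~2 edges. The engine in every case is Corollary~\ref{cor:add}: to handle $xy$ it suffices to produce a set $S$ of at least $5|V(H)|-2$ common neighbours of $x$ and $y$ in the current graph such that $S$ induces a rainbow clique and the edges from $\{x,y\}$ to $S$ form a rainbow $K_{2,|S|}$. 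The crucial bookkeeping observation is that \emph{every edge lying inside $A_1$ or inside $A_2$, and every edge inserted during the process, carries one of the pairwise distinct colours $c_i$}; hence any subgraph all of whose edges are of this type is automatically rainbow, and for such a subgraph Corollary~\ref{cor:add} is not even needed.

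The bulk of the edges can be dispatched by this ``all-added'' principle. In $G^{(1)}$ both $V(G_1)$ and $V(G_2)$ are complete, so once the Step~1 edges are present every $u\in C_1$ is adjacent to all of $A_2$ and every $u\in C_2$ to all of $A_1$. For a Step~2 edge $uv$ with $u\in C_1$, $v\in C_2$ I would take $S\subseteq A_2\setminus N_{G_2}(v)$ of size $5|V(H)|$, which is possible since $|A_2\setminus N_{G_2}(v)|> t^6-t^5$ because $v\notin B_2$. Then the $S$-internal edges and the edges $v$–$S$ lie inside $A_2$, while the edges $u$–$S$ were added in Step~1; thus $\{u,v\}\cup S$ spans a rainbow clique containing $uv$, giving a new rainbow $H$ for free. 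The same device handles, for each $u\in C_1$, every Step~1 edge $uv$ ($v\in A_2$) for which $u$ already has at least $5|V(H)|$ neighbours in $A_2\setminus\{v\}$: one places $S$ inside that neighbourhood, where all relevant edges are added. So only the \emph{first} few edges emanating from each vertex into the opposite side need a separate argument.

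For these bootstrap edges I would route through the static clique on the other side. Take $uv$ with $u\in C_1$, $v\in A_2$; the only common neighbours initially available are $A_1\cup X_1$, so I would use $S\subseteq A_1$. Here $F[S]$ is all-added and the cross edges $v$–$S$ form a fixed rainbow $K_{2,|S|}$-part, but the edges $u$–$S$ are internal $G_1$ edges — a mix of original and added edges — so the combined bipartite graph need not be rainbow. This is the main obstacle: a collision $c(uw)=c(vw)$, with $vw$ a fixed cross colour and $uw$ an adversarially chosen added colour, cannot be excluded edge-by-edge. I would control it by double counting: for fixed $u$,
\[
\sum_{v\in A_2}\bigl|\{w\in A_1: c(uw)=c(vw)\}\bigr|
=\sum_{w\in A_1}\bigl|\{v\in A_2: c(vw)=c(uw)\}\bigr|\le |A_1|,
\]
since for each fixed $w$ the cross edges $vw$ $(v\in A_2)$ receive distinct colours. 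Hence all but at most one target $v\in A_2$ leave $t^6-O(|V(H)|)$ collision-free choices of $w$, and for such a $v$ a greedy selection — discarding at each step the $O(|V(H)|)$ vertices $w$ whose $c(uw)$ or $c(vw)$ repeats an already-used colour, plus the few collision vertices — yields $S\subseteq A_1$ of size $5|V(H)|$ with $\{u,v\}$–$S$ rainbow, whereupon Corollary~\ref{cor:add} produces the rainbow $H$ regardless of the colour of $uv$. I would order the edges so that these ``good'' targets come first, after which $u$ has enough neighbours in $A_2$ to treat its unique ``bad'' target by the all-added argument; the family $E_{\overline{G}}(C_2,A_1)$ is symmetric, using $A_2$. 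The delicate point I expect to be hardest is precisely this collision count together with checking the greedy set stays large, as the added $u$–$A_1$ colours are only controlled up to the adversary's freedom.
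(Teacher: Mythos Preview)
Your plan is sound, and your Step~2 and late-Step~1 arguments (route through an ``all-added'' rainbow clique sitting in $A_2$, respectively $A_1$) coincide with the paper's. The bootstrap phase is where you genuinely diverge. The paper does not run a per-$v$ greedy or count collisions at all: for fixed $u\in C_1$ it first restricts to $S\subseteq A_1\setminus N_{G_1}(u)$ (so every $u$--$S$ color is an added $c_i$, which dissolves your ``mix of original and added'' worry at the source), then deletes from the complete bipartite graph $K_{S,A_2}$ the at most $|S|$ cross edges whose original color lies in $\{c(uw):w\in S\}$, and invokes Lemma~\ref{le:turan_KST}\,(ii) to extract in one shot subsets $S'\subseteq S$, $A'\subseteq A_2$ with $|S'|\ge t^4$, $|A'|\ge t^5$ on which the bipartite graph is still complete. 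This \emph{single} $S'$ then witnesses Corollary~\ref{cor:add} uniformly for every $x\in A'$; the remaining targets $y\in A_2\setminus A'$ are handled by routing through $A'$, exactly as you propose. So the paper trades your averaging/greedy machinery for a Zarankiewicz-type extraction. Your route is a legitimate alternative that avoids Lemma~\ref{le:turan_KST}\,(ii), but two details need tightening: you should make the restriction to $A_1\setminus N_{G_1}(u)$ explicit (otherwise an original $c_{G_1}(uw_1)$ may equal an added $c(uw_2)$, a repeat among the $u$--$S$ colors that your cross-collision count does not cover), and ``$t^6-O(|V(H)|)$ collision-free for all but one $v$'' overstates what $\sum_v\mathrm{coll}(v)\le|A_1|$ actually delivers --- Markov only gives at most one $v$ with $\mathrm{coll}(v)>t^6/2$, though that weaker bound is still ample for your greedy, so the conclusion ``at most one bad $v$'' survives.
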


\begin{proof} For an arbitrarily fixed vertex $u\in C_1$, there exists a set $S\subseteq A_1$ with $|S|\geq |A_1|-t^5\geq t^5$ such that $G_1$ contains no edges between $u$ and $S$. Note that in $G^{(1)}$, the subset $S\cup \{u\}$ induces a rainbow complete subgraph with colors from $\{c_1, c_2, \ldots, c_{a}\}$, and the edges between $S$ and $A_2$ forms a rainbow complete bipartite subgraph. Let $G'$ be the bipartite subgraph of $G$ with bipartition $(S, A_2)$ and $E(G')=\{v_1v_2\colon\, v_1\in S, v_2\in A_2, \mbox{$c(v_1v_2)$ is not a color on edges between $u$ and $S$}\}$. Since $G$ is rainbow, we have $|E(G')|\geq |S||A_2|-|S|$. By Lemma~\ref{le:turan_KST} (ii), there exist subsets $S'\subseteq S$ and $A'\subseteq A_2$ with $|S'|=t^4$ and $|A'|=t^5$ such that $G'[S'\cup A']$ is a complete bipartite subgraph. This implies that the edges between $S'$ and $A'\cup \{u\}$ form a rainbow complete bipartite subgraph. For any vertex $x\in A'$, by Corollary~\ref{cor:add} (with $F=G^{(1)}[S'\cup \{u, x\}]+ux$), the addition of the non-edge $ux$ in any color to $G^{(1)}$ creates a new rainbow copy of $H$. By symmetry, for any ordering $e_{a+b+1}, \ldots, e_{a+b+t^5}$ of the non-edges between $u$ and $A'$, the non-edges $e_{a+b+i}$ in color $c_{a+b+i}$ can be added to $G^{(1)}$, one at a time, so that every added edge creates a new rainbow copy of $H$. Let $G^{(2,1)}=G^{(1)}+\{e_{a+b+1}, \ldots, e_{a+b+t^5}\}$ with $c(e_{a+b+i})=c_{a+b+i}$ for each $i\in \left[t^5\right]$.

For any vertex $y\in A_2\setminus A'$, note that $G^{(2,1)}[A'\cup \{u,y\}]$ is a rainbow subgraph with colors from $\{c_1, c_2, \ldots, c_{a+b+t^5}\}$. Thus adding the non-edge $uy$ in any color to $G^{(2,1)}$ creates a new rainbow copy of $H$. By symmetry, this in fact shows that there exists an ordering $e_{a+b+1}, \ldots, e_{a+b+t^6|C_1\cup C_2|}$ of the non-edges in $E_{\overline{G}}(C_1, A_2)\cup E_{\overline{G}}(C_2, A_1)$ such that, the non-edges $e_{a+b+i}$ in color $c_{a+b+i}$ can be added to $G^{(1)}$, one at a time, so that every added edge creates a new rainbow copy of $H$. Let $G^{(2,2)}=G^{(1)}+\{e_{a+b+1}, \ldots, e_{a+b+t^6|C_1\cup C_2|}\}$ with $c(e_{a+b+i})=c_{a+b+i}$ for each $i\in \left[t^6|C_1\cup C_2|\right]$.

Let $v_1v_2$ be an arbitrarily fixed non-edge with $v_1\in C_1$ and $v_2\in C_2$. Note that there exists a set $S''\subseteq A_1$ with $|S''|\geq |A_1|-t^5\geq t^5$ such that $G_1$ contains no edges between $v_1$ and $S''$. Then $G^{(2,2)}[S''\cup \{v_1,v_2\}]$ is a rainbow subgraph with colors from $\{c_1, c_2, \ldots, c_{s}\}$. Thus adding the non-edge $v_1v_2$ in any color to $G^{(2,2)}$ creates a new rainbow copy of $H$. Since $v_1v_2$ is chosen arbitrarily, we know that there exists an ordering $e_{a+b+t^6|C_1\cup C_2|+1}, \ldots, e_{a+b+t^6|C_1\cup C_2|+|C_1||C_2|}$ of the non-edges in $E_{\overline{G}}(C_1,C_2)$ such that, the non-edges $e_{a+b+t^6|C_1\cup C_2|+i}$ in color $c_{a+b+t^6|C_1\cup C_2|+i}$ can be added to $G^{(2,2)}$, one at a time, so that every added edge creates a new rainbow copy of $H$. This completes the proof of Claim~\ref{cl:lim-1}.
\end{proof}

Let $q_1=|E_{\overline{G}}(C_1\cup A_1, C_2\cup A_2)|=t^6|C_1\cup C_2|+|C_1||C_2|$. By Claim~\ref{cl:lim-1}, there exists an ordering $e_{a+b+1}, \ldots, e_{a+b+q_1}$ of the non-edges in $E_{\overline{G}}(C_1\cup A_1, C_2\cup A_2)$ such that, the non-edges $e_{a+b+i}$ in color $c_{a+b+i}$ can be added to $G^{(1)}$, one at a time, so that every added edge creates a new rainbow copy of $H$. Let $G^{(2)}=G^{(1)}+\{e_{a+b+1}, \ldots, e_{a+b+q_1}\}$ with $c(e_{a+b+i})=c_{a+b+i}$ for each $i\in [q_1]$.

\begin{claim}\label{cl:lim-2} $E_{\overline{G}}(B_1, X_2\cup A_2\cup C_2)\cup E_{\overline{G}}(B_2, X_1\cup A_1\cup C_1)$ is nice to $G^{(2)}$.
\end{claim}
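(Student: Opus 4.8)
The plan is to order the non-edges of this family so that all of Step~3 ($E_{\overline{G}}(B_1,X_2\cup A_2)\cup E_{\overline{G}}(B_2,X_1\cup A_1)$) precedes all of Step~4 ($E_{\overline{G}}(B_1,C_2)\cup E_{\overline{G}}(B_2,C_1)$), and to handle each individual non-edge $wz$ by exhibiting, inside the current graph, a rainbow clique $K$ with $|K|=5|V(H)|$ to which both $w$ and $z$ are completely joined, such that the $2|K|$ edges from $\{w,z\}$ to $K$ are pairwise distinctly coloured. Given such a $K$, the subgraph induced on $K\cup\{w,z\}$ together with $wz$ meets the hypotheses of Corollary~\ref{cor:add} with $u=w$ and $v=z$, so whatever colour is later assigned to $wz$, its addition creates a new rainbow copy of $H$ through it. Since the clique $K$ produced for one non-edge never uses another non-edge of the current step, the order within each step is irrelevant, and it suffices to find a suitable $K$ for each non-edge separately. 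I treat $w\in B_1$; the case $w\in B_2$ is symmetric under interchanging $G_1$ and $G_2$.

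For Step~3, let $w\in B_1$ and $z\in X_2\cup A_2$. By the definition of $B_1$, the vertex $w$ has a set $S_w\subseteq A_1$ of at least $t^5$ neighbors in $G_1$, while $z$ is joined to every vertex of $A_1$ by an original edge of $G$, namely one of the bipartite edges inserted between $X_1\cup A_1$ and $X_2\cup A_2$. I would take $K$ to be any $5|V(H)|$-element subset of $S_w$ (possible since $t^5\ge 5|V(H)|$). Then $K\subseteq A_1$ is a rainbow clique, because $A_1$ is independent in $G_1$, so all edges inside $A_1$ were added during the saturation of $G_1$ and carry pairwise distinct colours; moreover every edge from $w$ or from $z$ to $K$ is an original edge of the rainbow graph $G$, so these $2|K|$ cone edges are pairwise distinctly coloured. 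Corollary~\ref{cor:add} then applies; note that possible clashes between the colours inside $K$ and the colours on the cone are harmless, as the corollary tolerates them.

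For Step~4, let $w\in B_1$ and $z\in C_2$. The obstruction here is that $z$ has \emph{no} original edge to $A_1$, so the Step~3 argument cannot be repeated, while routing the cone through added edges invites colour clashes (the adversary could force a chosen added colour to equal a fixed one). The way around this is to route \emph{both} $w$ and $z$ through $A_2$ using only added edges. Indeed, after Step~3 the vertex $w$ is joined to all of $A_2$ by the edges inserted in Step~3, while by the definition of $C_2$ the vertex $z$ has fewer than $t^5$ neighbors in $A_2$ within $G_2$, so that $z$ is joined to the at least $t^6-t^5$ vertices of $A_2\setminus N_{G_2}(z)$ by edges added during the saturation of $G_2$. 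I would take $K$ to be any $5|V(H)|$-element subset of $A_2\setminus N_{G_2}(z)$. Now every edge of $K\cup\{w,z\}$ other than $wz$ is an added edge: the edges inside $K$ (since $A_2$ is independent in $G_2$), the edges from $w$ to $K$, and the edges from $z$ to $K$. As all added edges receive pairwise distinct colours from $c_1,\ldots,c_s$, the clique $K$ is rainbow and the cone is automatically rainbow, so Corollary~\ref{cor:add} applies once more.

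The crux of the whole claim is Step~4, where neither side of the cone can be taken among the original edges of $G$. The resolution rests on the single observation that all added edges carry pairwise distinct colours, which makes any subgraph consisting \emph{solely} of added edges rainbow for free; hence the insistence that $K\cup\{w,z\}$ minus $wz$ consist entirely of added edges. This is precisely why Step~3 must come before Step~4: its role is to supply the added edges from $B_1$ to $A_2$ on which Step~4 depends. The only numerical checks are the trivial size bounds $t^5\ge 5|V(H)|$ and $t^6-t^5\ge 5|V(H)|$, which hold since $|V(H)|\le t$ and $t\ge 3$.
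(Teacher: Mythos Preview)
Your proof is correct and follows essentially the same approach as the paper's own proof: handle Step~3 first by placing the clique inside $A_1$ (on the side of the $B$-vertex, using its $\ge t^5$ neighbours there together with the original bipartite edges from $X_2\cup A_2$ to $A_1$), then handle Step~4 by placing the clique inside $A_2$ (on the side of the $C$-vertex) so that every edge of $K\cup\{w,z\}$ other than $wz$ is an added edge and hence the whole structure is automatically rainbow. Your observation that Step~3 must precede Step~4 precisely to furnish the $B_1\to A_2$ edges, and your remark that Corollary~\ref{cor:add} tolerates colour repetitions between the clique and the cone, match the paper's reasoning exactly.
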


\begin{proof} Let $(v, z)$ be an arbitrarily fixed pair of vertices with $v\in B_1$ and $z\in X_2\cup A_2$. Let $T\subseteq N_{G_1}(v)\cap A_1$ with $|T|=t^5$. Note that in $G^{(2)}+vz$, the vertex set $T\cup\{v, z\}$ induces a complete subgraph. Moreover, the edges between $T$ and $\{v, z\}$ forms a rainbow $K_{2,t^5}$, and $T$ induces a rainbow complete subgraph with colors from $\{c_1, c_2, \ldots, c_{a}\}$. By Corollary~\ref{cor:add} (with $F=G^{(2)}[T\cup \{v, z\}]+vz$), after adding $vz$ in any color, there is a new rainbow copy of $H$. By symmetry, the same statement holds for every pair of vertices $(v, z)$ with $(v,z)\in B_1\times (X_2\cup A_2)$ or $(v,z)\in B_2\times (X_1\cup A_1)$. Let $q=|E_{\overline{G}}(B_1, X_2\cup A_2)\cup E_{\overline{G}}(B_2, X_1\cup A_1)|=|B_1||X_2\cup A_2|+|B_2||X_1\cup A_1|$. Then there exists an ordering $e_{a+b+q_1+1}, \ldots,$ $e_{a+b+q_1+q}$ of the non-edges in $E_{\overline{G}}(B_1, X_2\cup A_2)\cup E_{\overline{G}}(B_2, X_1\cup A_1)$ such that, the non-edges $e_{a+b+q_1+i}$ in color $c_{a+b+q_1+i}$ can be added to $G^{(2)}$, one at a time, so that every added edge creates a new rainbow copy of $H$. Let $G^{(3,1)}=G^{(2)}+\{e_{a+b+q_1+1}, \ldots, e_{a+b+q_1+q}\}$ with $c(e_{a+b+q_1+i})=c_{a+b+q_1+i}$ for each $i\in [q]$.

Let $(u, x)$ be an arbitrarily fixed pair of vertices with $u \in C_1$ and $x\in B_2$. Note that there exists a set $S\subseteq A_1$ with $|S|\geq |A_1|-t^5\geq t^5$ such that $G_1$ contains no edges between $u$ and $S$. Then $G^{(3,1)}[S\cup \{u,x\}]$ is a rainbow subgraph with colors from $\{c_1, c_2, \ldots, c_{a+b+q_1+q}\}$. Thus adding the non-edge $ux$ in any color to $G^{(3,1)}$ creates a new rainbow copy of $H$. By symmetry, the same statement holds for every pair of vertices $(u, x)$ with $(u,x)\in C_1\times B_2$ or $(u,x)\in C_2\times B_1$. This completes the proof of Claim~\ref{cl:lim-2}.
\end{proof}

Let $q_2=|E_{\overline{G}}(B_1, X_2\cup A_2\cup C_2)\cup E_{\overline{G}}(B_2, X_1\cup A_1\cup C_1)|$. By Claim~\ref{cl:lim-2}, there exists an ordering $e_{a+b+q_1+1}, \ldots,$ $e_{a+b+q_1+q_2}$ of the non-edges in $E_{\overline{G}}(B_1, X_2\cup A_2\cup C_2)\cup E_{\overline{G}}(B_2, X_1\cup A_1\cup C_1)$ such that, the non-edges $e_{a+b+q_1+i}$ in color $c_{a+b+q_1+i}$ can be added to $G^{(2)}$, one at a time, so that every added edge creates a new rainbow copy of $H$. Let $G^{(3)}=G^{(2)}+\{e_{a+b+q_1+1}, \ldots, e_{a+b+q_1+q_2}\}$ with $c(e_{a+b+q_1+i})=c_{a+b+q_1+i}$ for each $i\in [q_2]$.

\begin{claim}\label{cl:lim-3} $E_{\overline{G}}(B_1, B_2)$ is nice to $G^{(3)}$.
\end{claim}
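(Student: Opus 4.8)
The plan is to treat each non-edge $vw$ with $v\in B_1$ and $w\in B_2$ independently: for every such pair I will exhibit, \emph{inside $G^{(3)}$}, a clean anchor, namely a set $I$ of $5|V(H)|$ vertices such that $G^{(3)}[I]$ is a rainbow clique and all edges between $\{v,w\}$ and $I$ are present and form a rainbow $K_{2,|I|}$. Once such an $I$ is found, Corollary~\ref{cor:add} applied with $F=G^{(3)}[I\cup\{v,w\}]+vw$ shows that adding $vw$ in any colour produces a rainbow copy of $H$ through $vw$, which is automatically new. Since the anchor lies entirely within $G^{(3)}$ and is untouched by the addition of any other $B_1$--$B_2$ non-edge, any ordering of $E_{\overline{G}}(B_1,B_2)$ works, which is precisely what is needed to conclude that $E_{\overline{G}}(B_1,B_2)$ is nice to $G^{(3)}$.

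The anchor comes from the large set of common \emph{added} neighbours of $v$ and $w$. Since $v\in B_1$ we have $v\notin X_1$, so $d_{G_1}(v)<m_1/4$ and hence $|C_1\setminus N_{G_1}(v)|\geq |C_1|-d_{G_1}(v)>\tfrac{2m_1}{3}-\tfrac{m_1}{4}=\tfrac{5m_1}{12}$. Every vertex $u\in C_1\setminus N_{G_1}(v)$ is joined to $v$ by a non-edge of $G_1$ (added when forming $G^{(1)}$) and to $w$ by an edge of $E_{\overline{G}}(B_2,C_1)$ (added in Claim~\ref{cl:lim-2}); both are \emph{added} edges and therefore carry pairwise distinct colours from the list. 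To make the interior of the anchor clean as well, I pass to an independent set of $G_1$: applying Lemma~\ref{le:turan_KST}(i) to $G_1[C_1\setminus N_{G_1}(v)]$, whose order exceeds $\tfrac{5m_1}{12}$ and which has at most $tm_1$ edges, yields an independent set of $G_1$ far larger than $5|V(H)|$, from which I keep a subset $I$ of size exactly $5|V(H)|$. Because $I$ is independent in $G_1$, every edge of $G^{(3)}[I]$ was added when forming $G^{(1)}$, so $G^{(3)}[I]$ is a rainbow clique; together with the previous observation, all edges among $I\cup\{v,w\}$ other than $vw$ are added edges and hence receive pairwise distinct colours, delivering both the rainbow clique $G^{(3)}[I]$ and the rainbow $K_{2,|I|}$ required by Corollary~\ref{cor:add}.

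The point I expect to be the crux, and the reason the anchor cannot simply be taken inside $A_1$ or $A_2$, is the interaction between the fixed colours of $G$ and the adversarial colour list. If $v$ reached the anchor through \emph{original} edges of $G_1$ (for instance along $N_{G_1}(v)\cap A_1$, which is exactly what places $v$ in $B_1$), then, since $w$ must reach the same anchor vertices through edges added in Claim~\ref{cl:lim-2}, the adversary could choose the list so that an original colour $c_G(va)$ coincides with the added colour on $wa'$; this would destroy the rainbow $K_{2,|I|}$, and no reordering repairs it. The device that avoids the problem outright is to insist that \emph{both} $v$ and $w$ meet the anchor only along added edges, which forces the anchor out of $A_1\cup A_2$ and into the non-neighbourhood of $v$ inside $C_1$; the independence of $I$ in $G_1$ then guarantees that the interior edges of the anchor are added as well, so no original colour ever enters the structure fed to Corollary~\ref{cor:add}. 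Checking the numerical inequalities, namely that $|C_1\setminus N_{G_1}(v)|$ is large and that the resulting independent set has size at least $5|V(H)|$, is routine given $m_1\geq t^{12}$ and $t\geq |V(H)|$.
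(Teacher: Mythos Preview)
Your proof is correct and follows essentially the same approach as the paper's: for $v\in B_1$, $w\in B_2$, pass to $C_1\setminus N_{G_1}(v)$, extract a large independent set of $G_1$ there via Lemma~\ref{le:turan_KST}(i), and observe that every edge among this set together with $v$ and $w$ is an added edge, so Corollary~\ref{cor:add} applies. The paper's write-up is terser (and contains a typo, writing $G^{(4)}$ for $G^{(3)}$), but the construction of the anchor and the reason it works are identical; your discussion of why the anchor must avoid $A_1$ and consist of non-neighbours of $v$ is a helpful elaboration of the same idea.
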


\begin{proof} Let $w$ be an arbitrarily fixed vertex of $B_1$. Since $w\notin X_1$, there exists a subset $C'\subseteq C_1$ such that $G$ contains no edges between $w$ and $C'$ and $|C'|\geq |C_1|-\frac{m_1}{4}\geq \frac{2m_1}{3}-\frac{m_1}{4}> \frac{m_1}{3}$. Note that $|E(G_1[C'])|\leq |E(G_1)|\leq tm_1 \leq 3t |C'|$. Applying Lemma~\ref{le:turan_KST} (i) (with $c=3t$) to $G_1[C']$, we have that $G_1$ contains an independent set $C''\subseteq C'$ of size at least $\frac{|C'|}{6t+1}\geq t^4$. Note that for any vertex $w'\in B_2$, the subset $C''\cup \{w,w'\}$ induces a rainbow subgraph of $G^{(4)}$. Thus after adding $ww'$ in any color, there is a new rainbow copy of $H$. Since $w$ and $w'$ are chosen arbitrarily, the result follows.
\end{proof}

Let $q_3=|B_1||B_2|$. By Claim~\ref{cl:lim-3}, there exists an ordering $e_{a+b+q_1+q_2+1}, \ldots, e_{a+b+q_1+q_2+q_3}$ of the non-edges between $B_1$ and $B_2$ such that, the non-edges $e_{a+b+q_1+q_2+i}$ in color $c_{a+b+q_1+q_2+i}$ can be added to $G^{(3)}$, one at a time, so that every added edge creates a new rainbow copy of $H$. Let $G^{(4)}=G^{(3)}+\{e_{a+b+q_1+q_2+1}, \ldots, e_{a+b+q_1+q_2+q_3}\}$ with $c(e_{a+b+q_1+q_2+i})=c_{a+b+q_1+q_2+i}$ for each $i\in [q_3]$.

\begin{claim}\label{cl:lim-4} $E_{\overline{G}}(C_1, X_2)\cup E_{\overline{G}}(C_2, X_1)$ is nice to $G^{(4)}$.
\end{claim}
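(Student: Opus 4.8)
The plan is to exhibit, for every non-edge $uv$ in $E_{\overline G}(C_1,X_2)\cup E_{\overline G}(C_2,X_1)$, a set $W$ of common neighbours with $|W|\geq 5|V(H)|$ (recall $|V(H)|\le t$) such that $W\cup\{u,v\}$ induces a complete graph in $G^{(4)}+uv$, the induced subgraph $G^{(4)}[W]$ is rainbow, and the edges between $\{u,v\}$ and $W$ form a rainbow $K_{2,|W|}$. Then Corollary~\ref{cor:add}, applied with $F=G^{(4)}[W\cup\{u,v\}]+uv$, produces a new rainbow copy of $H$ through $uv$ no matter which color $uv$ receives, which is exactly what ``nice to $G^{(4)}$'' requires. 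By the symmetry between $G_1$ and $G_2$ it suffices to treat $E_{\overline G}(C_1,X_2)$. Note that in $G^{(4)}$ both $G_1$ and $G_2$ are complete, every non-edge of $G$ except those in $E_{\overline G}(C_1,X_2)\cup E_{\overline G}(C_2,X_1)$ has already been added, and in particular $C_1$ is joined to all of $A_2\cup B_2\cup C_2$ by the edges added in Claims~\ref{cl:lim-1} and~\ref{cl:lim-2}.

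The guiding principle (already implicit in Claims~\ref{cl:lim-1}--\ref{cl:lim-3}) is that every \emph{added} edge carries a color from the list $c_1,c_2,\ldots$, and these are pairwise distinct; hence any $W$ all of whose incident edges (inside $W$ and to $\{u,v\}$) are added edges is automatically rainbow, with no possible clash. The only real danger is a clash between an added color and one of the \emph{fixed} colors of $G$. For a non-edge $uv$ with $u\in C_1$, $v\in X_2$ I would split on $d_{G_2}(v)$. \textbf{Case A:} if $v$ misses many vertices outside $X_2$, say $|V(G_2)\setminus(N_{G_2}(v)\cup X_2)|\geq m_2/t^3$, then Lemma~\ref{le:turan_KST}(i) yields an independent set $W$ of $G_2$ of size $\geq 5|V(H)|$ inside that set (using $|E(G_2)|\leq tm_2$ and $m_2\geq t^{12}$); every edge inside $W$ is an added non-edge of $G_2$, every edge from $u$ to $W$ lies in the already-added blocks $E_{\overline G}(C_1,A_2\cup B_2\cup C_2)$, and every edge from $v$ to $W$ is an added non-edge of $G_2$, so $W$ works for \emph{all} such $u$ with no clash. \textbf{Case B:} otherwise $M\colonequals N_{G_2}(v)\setminus X_2$ is large and $v$ is joined to $M$ by fixed rainbow $G_2$-edges; fix an independent set $I\subseteq M$ of $G_2$ of size $\gtrsim m_2/t$ (Lemma~\ref{le:turan_KST}(i)). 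Since $G$ is rainbow and added colors are pairwise distinct, for each color on a $v$-$I$ edge at most one added $C_1$-$I$ edge repeats it, so at most $|I|$ of the $C_1$-$I$ edges are ``bad''. A counting over $u\in C_1$ then shows that all but a bounded number of vertices $u$ admit $W\subseteq I$ with $|W|\ge 5|V(H)|$ on which $c(u\cdot)\notin\{c(v\cdot)\}$; here $G^{(4)}[W]$ is rainbow ($W$ independent, hence all added), $v$-$W$ is rainbow (fixed $G_2$-edges), and $u$-$W$ avoids the $v$-$W$ colors, so $\{u,v\}$-$W$ is a rainbow $K_{2,|W|}$ and Corollary~\ref{cor:add} applies.

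The hard part is the bounded set of exceptional pairs in Case B: those $(u,v)$ for which the given coloring has arranged almost all added $u$-$M$ colors to coincide with the fixed $v$-$M$ colors. A counting argument using that the color sets $\{c(vx):x\}$ are pairwise disjoint for distinct $v\in X_2$ (as $G$ is rainbow) bounds their number by a constant depending only on $t$. I would dispose of these by ordering them \emph{last}: once every other edge of $E_{\overline G}(C_1,X_2)$ is present, each such $v$ is joined by already-added edges to all but boundedly many vertices of $C_1$, so I can take $W$ to be an independent set of $G_1$ of size $\ge 5|V(H)|$ inside $C_1\setminus N_{G_1}(u)$ avoiding those few vertices (possible by Lemma~\ref{le:turan_KST}(i), as $|C_1|\ge 2m_1/3$ and $G_1$ has $\le tm_1$ edges). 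Then every edge inside $W$, from $u$ to $W$, and from $v$ to $W$ is an added edge, so the whole configuration is rainbow and Corollary~\ref{cor:add} again yields a new rainbow copy of $H$. Concatenating the orderings from Cases~A, B, and the deferred pairs shows $E_{\overline G}(C_1,X_2)$ is nice to $G^{(4)}$; handling $E_{\overline G}(C_2,X_1)$ identically with the roles of $G_1,G_2$ swapped completes the proof of Claim~\ref{cl:lim-4}.
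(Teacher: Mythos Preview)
Your plan is correct but takes a different, somewhat longer route than the paper. You work on the $G_2$ side: for each $v\in X_2$ you split on whether $v$ has a large non-neighbourhood outside $X_2$ (Case~A, where an independent set $W\subseteq V(G_2)\setminus(N_{G_2}(v)\cup X_2)$ makes every relevant edge an added one) or not (Case~B, where you take $I$ independent in $N_{G_2}(v)\setminus X_2$ and count clashes between added $u$--$I$ colours and fixed $v$--$I$ colours, leaving at most one or two exceptional $u\in C_1$ per $v$); the $O(t)$ exceptional pairs are deferred and handled via an independent set in $C_1$ once the rest of $E_{\overline G}(C_1,X_2)$ is present.

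The paper instead stays on the $G_1$ side and exploits the construction directly: $A_1$ was joined completely to $X_2$ in $G$ by design. For $u\in C_1$ and $y\in X_2$ it takes $S\subseteq A_1\setminus N_{G_1}(u)$ of size $\ge t^5$; then $G^{(4)}[S]$ is rainbow (added $G_1$ non-edges), the $y$--$S$ edges are fixed $G$-colours, and the $u$--$S$ edges are added $G_1$ non-edge colours, so Corollary~\ref{cor:add} applies provided no added $u$--$A_1$ colour hits a fixed $A_1$--$X_2$ colour. The set $C'\subseteq C_1$ of $u$ for which such a clash occurs has size at most $|A_1||X_2|\le 8t^7$ \emph{uniformly in} $y$---no case split on $v$ is needed at all. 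For $w\in C'$ the paper then does exactly what you do for your exceptional pairs: after the $C''$--$X_2$ edges are in, an independent set $C^\ast\subseteq C''\setminus N_{G_1}(w)$ of $G_1$ gives an all-added configuration for any $w'\in X_2$. The gain from using $A_1$ is that it eliminates your Case~A/B split and the per-$v$ counting entirely, and the exceptional set is a single subset of $C_1$ rather than a set of pairs; your approach works, but it does not use the one structural feature of $G$ (the $A_1$--$X_2$ join) that was placed there precisely for this step.
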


\begin{proof} Let $C'\subseteq C_1$ be the set of vertices $v$ such that there exists a vertex $x\in A_1$ with $vx\notin E(G)$ and $c_{G^{(4)}}(vx)\in \{c(e)\colon\, e\in E_G(A_1, X_2)\}$. Let $C''=C_1\setminus C'$. Since $c_1, c_2, \ldots, c_s$ are pairwise distinct colors, we have $|C'|\leq |A_1||X_2|\leq 8t^7$ and $|C''|=|C_1|-|C'|\geq \frac{2m_1}{3}-8t^7\geq \frac{m_1}{3}$.

Let $uy$ be an arbitrarily fixed non-edges with $u\in C''$ and $y\in X_2$. Note that there exists a set $S\subseteq A_1$ with $|S|\geq |A_1|-t^5\geq t^5$ such that $G_1$ contains no edges between $u$ and $S$. Note that in $G^{(4)}+uy$, the vertex set $S\cup\{u, y\}$ induces a complete subgraph. Moreover, the edges between $S$ and $\{u, y\}$ forms a rainbow $K_{2,t^5}$, and $S$ induces a rainbow complete subgraph with colors from $\{c_1, c_2, \ldots, c_{a}\}$. By Corollary~\ref{cor:add} (with $F=G^{(4)}[S\cup \{u, y\}]+uy$), after adding $uy$ in any color, there is a new rainbow copy of $H$. Since $uy$ is chosen arbitrarily, we know that there exists an ordering $e_{a+b+q_1+q_2+q_3+1}, \ldots, e_{a+b+q_1+q_2+q_3+|C''||X_2|}$ of the non-edges in $E_{\overline{G}}(C'',X_2)$ such that, the non-edges $e_{a+b+q_1+q_2+q_3+i}$ in color $c_{a+b+q_1+q_2+q_3+i}$ can be added to $G^{(4)}$, one at a time, so that every added edge creates a new rainbow copy of $H$. Let $G^{(5,1)}=G^{(4)}+\{e_{a+b+q_1+q_2+q_3+1}, \ldots, e_{a+b+q_1+q_2+q_3+|C''||X_2|}\}$ with $c(e_{a+b+q_1+q_2+q_3+i})=c_{a+b+q_1+q_2+q_3+i}$ for each $i\in \left[|C''||X_2|\right]$.

Let $w$ be an arbitrarily fixed vertex of $C'$. Since $w\notin X_1$, there exists a subset $C'''\subseteq C''$ such that $G$ contains no edges between $w$ and $C'''$ and $|C'''|\geq |C''|-\frac{m_1}{4}\geq \frac{m_1}{3}-\frac{m_1}{4}\geq \frac{m_1}{12}$. Note that $|E(G_1[C'''])|\leq |E(G_1)|\leq tm_1 \leq 12t |C'''|$. Applying Lemma~\ref{le:turan_KST} (i) (with $c=12t$) to $G_1[C''']$, we have that $G_1$ contains an independent set $C^{\ast}\subseteq C'''$ of size at least $\frac{|C'''|}{24t+1}\geq t^3$. Note that for any vertex $w'\in X_2$, the subset $C^{\ast}\cup \{w,w'\}$ induces a rainbow subgraph of $G^{(4)}$. Thus after adding $ww'$ in any color, there is a new rainbow copy of $H$. Since $w$ and $w'$ are chosen arbitrarily, this holds for every non-edge between $C'$ and $X_2$. By symmetry, this in fact implies that there exists an ordering $e_{a+b+q_1+q_2+q_3+1}, \ldots, e_{s}$ of the non-edges in $E_{\overline{G}}(C_1, X_2)\cup E_{\overline{G}}(C_2, X_1)$ such that, the non-edges $e_{a+b+q_1+q_2+q_3+i}$ in color $c_{a+b+q_1+q_2+q_3+i}$ can be added to $G^{(4)}$, one at a time, so that every added edge creates a new rainbow copy of $H$. The proof of Claim~\ref{cl:lim-4} is complete.
\end{proof}

By Claim~\ref{cl:lim-4}, $G$ is weakly $H$-rainbow saturated, and thus Inequality (\ref{eq:subadd-1}) holds. By Lemma~\ref{le:Fekete}, the limit $\lim_{n\to \infty} \frac{\rwsat(n, H)}{n}$ exists. This completes the proof of Theorem~\ref{th:rwsat_limits}.
\hfill$\square$
\vspace{0.2cm}

We next present our proof of Theorem~\ref{th:rwsat_limits0} in the following more precise form, which generalizes a result of Faudree, Gould and Jacobson \cite{FaGJ} on weak saturation numbers, and a result of Chakraborti, Hendrey, Lund and Tompkins \cite{CHLT} on weak rainbow saturation numbers of complete graphs. Recall that for any graph $H$, $f(H)$ is the smallest integer $n$ such that for each $N\in \{n-1, n\}$ we have $\ex(N, \mathscr{H})\leq {N\choose 2}-2N-2$, where $\mathscr{H}\colonequals \{H-\{u, v\}\colon\, uv\in E(H)\}$.

\begin{theorem}\label{th:rwsat_bounds}
Let $H$ be a non-empty graph. Then the following statements hold.
\begin{itemize}
\item[{\rm (i)}] If $H$ contains a pendant edge and $n>f(H)+1$, then $\rwsat(n, H)\leq {f(H)+1 \choose 2}$.
\item[{\rm (ii)}] If $H$ contains no pendant edges and $n>f(H)+\delta'(H)$, then
$$\frac{1}{2}\delta'(H)n \leq\rwsat(n, H)\leq \delta'(H)(n-f(H)-\delta'(H))+{f(H)+\delta'(H) \choose 2}.$$
\end{itemize}
\end{theorem}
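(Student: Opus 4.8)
The plan is to prove Theorem~\ref{th:rwsat_bounds} by establishing the stated upper bounds via explicit constructions of weakly $H$-rainbow saturated graphs, and the lower bound in (ii) via a degree-counting argument on the non-edge addition ordering. I would treat the three bounds (the single upper bound in (i), and the upper and lower bounds in (ii)) as separate tasks, and within each I would exploit Corollary~\ref{cor:add} as the central engine for verifying that added edges create rainbow copies of $H$.

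For the upper bound in (i), I would take $G$ to be a rainbow clique $K_{f(H)+1}$ together with $n-f(H)-1$ isolated vertices, so that $|E(G)| = \binom{f(H)+1}{2}$. The idea is that since $H$ has a pendant edge, say incident to a leaf $w$ with unique neighbor, a rainbow copy of $H-w$ sitting inside the clique can be extended to a rainbow $H$ by attaching $w$ via a single new edge to an isolated vertex; after the first such extension the isolated vertex is absorbed into a growing rainbow clique, and one proceeds iteratively. I would order the non-edges so that isolated vertices are attached one at a time to the existing rainbow clique (first filling in one full new clique-vertex at a time, then the edges among newly added vertices), and at each step invoke Corollary~\ref{cor:add} applied to a clique of order at least $5|V(H)|\leq f(H)$ containing the new endpoints to certify a new rainbow $H$. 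The pendant structure is precisely what lets the very first edge to a degree-zero vertex create a copy of $H$, which is where the hypothesis is used.

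For the upper bound in (ii), I would use a construction consisting of a rainbow clique on $f(H)+\delta'(H)$ vertices, together with $n-f(H)-\delta'(H)$ additional vertices each joined by exactly $\delta'(H)$ edges to a fixed set of $\delta'(H)$ clique vertices, giving edge count $\delta'(H)(n-f(H)-\delta'(H)) + \binom{f(H)+\delta'(H)}{2}$. The verification again proceeds by an ordering in which the sparse vertices are successively completed into the clique: for a sparse vertex $v$ already joined to $\delta'(H)$ vertices, adding a further edge $vu$ should create a rainbow $H$ using the vertex $x\in V(H)$ of minimum nonzero degree $\delta'(H)$ as the image of $v$, with Corollary~\ref{cor:add} supplying the needed rainbow substructure in the clique. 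The delicate point is ensuring the initial $\delta'(H)$ edges at each sparse vertex suffice to seed a rainbow $H$ once the first additional incident edge is added, which is exactly why $\delta'(H)$, rather than $\delta(H)$, is the right parameter.

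The lower bound $\frac{1}{2}\delta'(H)n \leq \rwsat(n,H)$ is where I expect the main obstacle. Let $G$ be any weakly $H$-rainbow saturated graph with the guaranteed ordering $e_1,\dots,e_m$ of $E(\overline{G})$. The key observation is that when the first edge $e_i$ incident to a given vertex $v$ is added, it must create a rainbow copy of $H$ through $e_i$; since $H$ has no pendant edges, every vertex of $H$ has degree at least $\delta'(H)\geq 2$, so in this rainbow copy both endpoints of $e_i$ must already have degree at least $\delta'(H)$ in the current graph $G_{i}$. Tracking the \emph{first} added edge at each vertex, I would argue that at the moment $v$ first receives a new edge, $v$ already has degree at least $\delta'(H)-1$ in $G$ plus the new edge, forcing $d_G(v)\geq \delta'(H)-1$ for all but boundedly many vertices; summing degrees then yields $2|E(G)|\geq (\delta'(H)-1)n - O(1)$, which is not quite tight. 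To reach the factor $\frac12\delta'(H)$ I would instead sum more carefully by assigning to each vertex a charge reflecting that the rainbow $H$ needs $\delta'(H)$ distinct colors at each of its vertices, being careful that edges of $G$ and previously added edges both count toward degrees; the crux is converting the per-vertex degree-$\delta'(H)$ requirement in the rainbow copies into a clean global bound on $|E(G)|$ without over- or under-counting edges shared between vertices, and handling the constantly-many exceptional vertices that may have small degree.
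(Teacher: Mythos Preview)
Your plans for the two upper bounds are essentially the paper's own arguments: the constructions you describe (a rainbow $K_{f(H)+1}$ plus isolates for (i); a rainbow $K_{f(H)+\delta'(H)}$ with each remaining vertex joined to a fixed $\delta'(H)$-subset for (ii)) are exactly what the paper uses, and Lemma~\ref{le:gen_add}/Corollary~\ref{cor:add} are indeed the tools that certify each added edge creates a rainbow $H$. The orderings you sketch differ slightly in detail from the paper's but work for the same reasons.

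The genuine gap is in the lower bound of (ii). Your first degree-counting pass is correct as far as it goes: when the first non-edge $e_{i_1}$ incident to a vertex $v$ is added, the rainbow $H$ through $e_{i_1}$ forces $d_G(v)+1\ge\delta'(H)$, hence $d_G(v)\ge\delta'(H)-1$, and summing gives only $|E(G)|\ge\tfrac{1}{2}(\delta'(H)-1)n$. Your proposed remedy of a ``more careful charge'' is not how the gap closes, and it is hard to see how any purely degree-based accounting could do it, since that first added edge really does contribute one unit of degree at $v$; there are no ``constantly-many exceptional vertices'' to handle, and no global summation is needed at all.

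What you are missing is that the colour list $c_1,\dots,c_m$ is \emph{adversarial}: the fixed ordering must succeed for \emph{every} list of pairwise distinct colours. Suppose some vertex $v$ has $1\le d_G(v)\le\delta'(H)-1$, and let $e_{i_1}$ be the first non-edge at $v$ in the ordering. Choose $c_{i_1}$ to equal one of the colours already present on an edge of $G$ incident with $v$ (and complete the list with arbitrary distinct colours). After adding $e_{i_1}$ in colour $c_{i_1}$, the vertex $v$ is incident to at most $\delta'(H)$ edges but carries at most $d_G(v)\le\delta'(H)-1$ \emph{distinct} colours, so no rainbow copy of $H$ through $e_{i_1}$ can exist. (If $d_G(v)=0$ the degree alone already blocks a copy, since $\delta'(H)\ge 2$.) This immediately forces $d_G(v)\ge\delta'(H)$ for \emph{every} vertex, giving $|E(G)|\ge\tfrac{1}{2}\delta'(H)n$ in one stroke.
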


\begin{proof}
(i) We shall show that if $H$ contains a pendant edge and $n> f(H)+1\geq |V(H)|$, then $\rwsat(n,H)\leq {f(H)+1\choose 2}$. Let $G$ be an $n$-vertex graph consisting of a rainbow clique of order $f(H)+1$ and $n-f(H)-1$ isolated vertices. It suffices to show that $G$ is weakly $H$-rainbow saturated. Let $c_1, c_2, \ldots, c_{m}$ be an arbitrarily fixed list of pairwise distinct colors from $\mathbb{N}$, where $m={n\choose 2}-{f(H)+1\choose 2}$. Let $U\subset V(G)$ be the set of $n-f(H)-1$ isolated vertices, $V=V(G)\setminus U$, and $s=|U||V|=(n-f(H)-1)(f(H)+1)$.

For any pair of vertices $(u,v)\in U\times V$ and any color $c^{\ast}\in \mathbb{N}$, there exists a subset $V'\subseteq V$ with $v\in V'$ and $|V'|\geq |V|-1= f(H)\geq |V(H)|-1$ such that $G[V']$ is a rainbow clique and contains no edge of color $c^{\ast}$. Thus the addition of $uv$ in color $c^{\ast}$ creates a new rainbow copy of $H$ (with $uv$ being the pendant edge). By symmetry, this implies that for any ordering $e_1, e_2, \ldots, e_{s}$ of the non-edges between $U$ and $V$, the non-edges $e_i$ in color $c_i$ can be added to $G$, one at a time, so that every added edge creates a new rainbow copy of $H$. Let $G'=G+\{e_1, e_2, \ldots, e_{s}\}$ with $c(e_i)=c_i$ for each $i\in [s]$.

For any non-edge $u_1u_2$ within $U$, we consider the subgraph $F=G'[V^{\ast}\cup \{u_1, u_2\}]+u_1u_2$, where $V^{\ast}$ is an arbitrary subset of $V$ with $|V^{\ast}|=f(H)$. Note that $F$ satisfies the hypothesis of Lemma~\ref{le:gen_add}. Thus the addition of $u_1u_2$ in any color creates a new rainbow copy of $H$. By symmetry, this implies that for any ordering $e_{s+1}, e_{s+2}, \ldots, e_{m}$ of the non-edges within $U$, the non-edges $e_{s+i}$ in color $c_{s+i}$ can be added to $G'$, one at a time, so that every added edge creates a new rainbow copy of $H$. Therefore, $G$ is weakly $H$-rainbow saturated, and thus $\rwsat(n,H)\leq |E(G)|= {f(H)\choose 2}$.

(ii) For the lower bound, let $G$ be a weakly $H$-rainbow saturated graph on $n$ vertices. Then there exists an ordering $e_1, e_2, \ldots, e_m$ of $E\left(\overline{G}\right)$ such that, for any list $c_1, c_2, \ldots, c_m$ of pairwise distinct colors from $\mathbb{N}$, the non-edges $e_i$ in color $c_i$ can be added to $G$, one at a time, so that every added edge creates a new rainbow copy of $H$. Since $H$ contains no pendant edges and $E(H)\neq \emptyset$, we have $d_H(u)=0$ or $d_H(u)\geq 2$ for every vertex $u\in V(H)$, i.e., $\delta'(H)\geq 2$. This implies that $G$ contains no isolated vertices.

Suppose for a contradiction that $|E(G)|<\frac{1}{2}\delta'(H)n$. Then $G$ contains a vertex $v$ with $d\colonequals d_G(v)\leq \delta'(H)-1$. Let $e_{i_1}, \ldots, e_{i_{n-1-d}}$ be all the non-edges of $G$ containing $v$ as an end-vertex, where $i_1<\cdots<i_{n-1-d}$. Let $G_{i_1-1}=G+\{e_1, \ldots, e_{i_1-1}\}$ and $G_{i_1}=G_{i_1-1}+e_{i_1}$. Then $d_{G_{i_1-1}}(v)=d_G(v)\leq \delta'(H)-1$ and $d_{G_{i_1}}(v)=d_G(v)+1\leq \delta'(H)$. If $c_{i_1}$ is a color from the set of colors on edges incident with $v$ in $G$, then the addition of $e_{i_1}$ in color $c_{i_1}$ to $G'$ does not create any new rainbow copy of $H$, a contradiction. This implies that $\rwsat(n, H)\geq \frac{1}{2}\delta'(H)n$.

For the upper bound, consider the following construction. Let $A$, $B$ and $C$ be three pairwise disjoint sets of vertices with $|A|=\delta'(H)$, $|B|=f(H)$ and $|C|=n-f(H)-\delta'(H)$. Let $G'$ be a rainbow graph on vertex set $A\cup B\cup C$ whose edge set consists of all edges within $A\cup B$ and all edges between $A$ and $C$. Then $|E(G')|=\delta'(H)(n-f(H)-\delta'(H))+{f(H)+\delta'(H)\choose 2}.$ It suffices to show that $G'$ is weakly $H$-rainbow saturated.

Let $c'_1, c'_2, \ldots, c'_{t}$ be an arbitrarily fixed list of pairwise distinct colors from $\mathbb{N}$, where $t={n\choose 2}-|E(G')|$. Let $xy$ be an arbitrary non-edge with $x\in B$ and $y\in C$, and let $G''$ be the edge-colored graph obtained from $G'$ by adding $xy$ in any color. Since $G'$ is rainbow, there exists a subset $X\subseteq A\cup B$ with $x\in X$ and $|X|\geq |A\cup B|-1\geq |V(H)|-1$ such that $G''[X\cup \{y\}]$ is rainbow, $d_{G''[X\cup \{y\}]}(y)\geq \delta'(H)$, and $d_{G''[X\cup \{y\}]}(z)=|X|$ for every $z\in X$. This implies that adding $xy$ in any color creates a new rainbow copy of $H$. By symmetry, this implies that for any ordering $e'_1, e'_2, \ldots, e'_{|B||C|}$ of the non-edges between $B$ and $C$, the non-edges $e'_i$ in color $c'_i$ can be added to $G'$, one at a time, so that every added edge creates a new rainbow copy of $H$. Let $G'''=G'+\{e'_1, e'_2, \ldots, e'_{|B||C|}\}$ with $c(e'_i)=c'_i$ for each $i\in [|B||C|]$. Next, we consider the remaining non-edges, i.e., non-edges within $C$. Let $w, w'$ be two distinct vertices of $C$, and let $F=G'''[B\cup \{w, w'\}]+ww'$. Then $F$ satisfies the hypothesis of Lemma~\ref{le:gen_add}. Thus the addition of $ww'$ in any color creates a new rainbow copy of $H$. By symmetry, this implies that for any ordering $e'_{|B||C|+1}, e'_{|B||C|+2}, \ldots, e'_{t}$ of the non-edges within $C$, the non-edges $e'_{|B||C|+i}$ in color $c'_{|B||C|+i}$ can be added to $G'''$, one at a time, so that every added edge creates a new rainbow copy of $H$. Therefore, $G'$ is weakly $H$-rainbow saturated, and thus $\rwsat(n,H)\leq |E(G')|=\delta'(H)(n-f(H)-\delta'(H))+{f(H)+\delta'(H)\choose 2}$. This completes the proof.
\end{proof}

\begin{remark}\label{le:bounds}
In the case when $H$ is a complete graph $K_{r}$ $(r\geq 3)$, the upper bound given by Theorem~\ref{th:rwsat_bounds} {\rm (ii)} can be improved to $\rwsat(n, K_{r})\leq(r-1)(n-r)+{r\choose 2}$. Indeed, in this case, when we construct the graph $G'$, we may choose $B$ to be a single vertex. Moreover, when we add the non-edge $ww'$, we can find a rainbow copy of $H$ within $A\cup \{w,w'\}$ $($so we can avoid the use of Lemma~\ref{le:gen_add}$)$. This upper bound on $\rwsat(n, K_{r})$ was first obtained by Chakraborti, Hendrey, Lund and Tompkins \cite{CHLT}.
\end{remark}

\section{Concluding remarks}
\label{sec:ch-conclu}

In this paper, we prove that the limit $\lim_{n\to \infty} \frac{\rwsat(n, H)}{n}$ exists for any non-empty graph $H$. We also show that this limit is nonzero if and only if $H$ contains no pendant edges by proving that if $H$ contains no pendant edges and $n>f(H)+\delta'(H)$, then
\begin{equation}\label{eq:bound}
\frac{1}{2}\delta'(H)n \leq\rwsat(n, H)\leq \delta'(H)(n-f(H)-\delta'(H))+{f(H)+\delta'(H) \choose 2},
\end{equation}
where $\delta'(H)\colonequals \min\{d_H(v)\colon\, v\in V(H), d_H(v)\neq 0\}$.

For sufficiently large $n$, the lower bound in Inequality (\ref{eq:bound}) cannot be improved to $cn$ for any $c>\frac{1}{2}(\delta'(H)+1)$. To see this, let $H$ be the graph obtained from $2K_{t}$ ($t\geq 3$) by adding a single edge between the two copies of $K_{t}$. Note that $\delta'(H)=t-1$. For sufficiently large $n$, we write $n=\lfloor\frac{n}{t+1}\rfloor(t+1)+r$, where $0\leq r \leq t$. Let $G$ be a rainbow copy of $(rK_{t+2})\cup ((\lfloor\frac{n}{t+1}\rfloor-r)K_{t+1})$. It is easy to check that $G$ is weakly $H$-rainbow saturated. Thus $\rwsat(n, H)\leq |E(G)|=\frac{t}{2}n+\Theta(1)=\frac{1}{2}(\delta'(H)+1)n+\Theta(1)$. For sufficiently large $n$, the upper bound in Inequality (\ref{eq:bound}) cannot be improved to $c'n$ for any $c'<\delta'(H)-1$. For example, when $H=K_t$ ($t\geq 3$), we have $\rwsat(n, H)\geq \wsat(n, H)={n\choose 2}-{n-t+2\choose 2}=(t-2)n-\Theta(1)=(\delta'(H)-1)n-\Theta(1)$ (see \cite{Alon985JCTA,Kal}). Given this, we pose the following two questions.

\begin{question}\label{ques:lower}
Let $H$ be a non-empty graph containing no pendant edges. Is it true that $\rwsat(n, H)\geq \left(\frac{\delta'(H)+1}{2}-o(1)\right)n$?
\end{question}

\begin{question}\label{ques:complete}
For any integer $t\geq 3$, does there exist a constant $c_t$ such that $\rwsat(n, K_t)= (t-1)n+c_t$?
\end{question}

It is also natural to ask for what graphs $H$, it holds $\rwsat(n, H)\leq cn$ for some $c<\delta'(H)$ and sufficiently large integers $n$. Our Theorem~\ref{th:rwsat_bounds} (i) implies that this is the case when $\delta'(H)=1$. We can also show that this holds for a large family of graphs $H$ with $\delta'(H)=2$ (including all cycles of length at least 5). Let $\mathscr{F}$ be the family of graphs $H$ containing an edge $uv$ with $d_H(u)=d_H(v)=2$ such that $uv$ is the middle edge of an induced subgraph $P_4$ in $H$. Note that for any $H\in \mathscr{F}$, we have $\delta'(H)\in \{1, 2\}$.

\begin{proposition}\label{prop:degree2}
For any graph $H\in \mathscr{F}$ with $\delta'(H)=2$, there exists a constant $c_H$ such that $\rwsat(n, H)\leq \frac{3}{2}n+c_H$.
\end{proposition}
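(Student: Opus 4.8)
The plan is to build, for each large $n$, a weakly $H$-rainbow saturated graph $G$ with $\tfrac32 n+c_H$ edges. Fix once and for all an induced $P_4$ on vertices $a,u,v,b$ with edges $au,uv,vb$ and $d_H(u)=d_H(v)=2$; this exists because $H\in\mathscr F$, while $a$ and $b$ may have arbitrarily large degree. Let $K$ be a rainbow clique on $5|V(H)|$ vertices with a distinguished vertex $w$, and split the remaining $N:=n-|K|$ vertices into pairs $\{x_i,y_i\}$ (throwing at most one spare vertex into $K$). Apart from the clique $K$, let the edges of $G$ be, for every $i$, the triangle on $\{x_i,y_i,w\}$, that is $x_iy_i,\ x_iw,\ y_iw$, and colour everything so that $G$ is rainbow. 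Then $|E(G)|=\binom{|K|}{2}+\tfrac32 N=\tfrac32 n+c_H$, and every pair-vertex has degree exactly $2$.

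I would order $E(\overline G)$ in two blocks: Block~1 is all non-edges from a pair-vertex to $K\setminus\{w\}$; Block~2 is all non-edges between pair-vertices of different pairs (these are the only non-edges, since $K$ is complete). Once Block~1 has been added, each $x_i$ and $y_i$ is joined to all of $K$ by at least $|K|-1$ distinctly coloured edges; for a Block~2 non-edge $xx'$ I would let it play $uv$, choose clique-neighbours of $x$ and $x'$ (to play $a$ and $b$) whose edge-colours avoid the added colour, and embed $H-\{u,v\}$ inside $K$. Since $a,b$ then live in the clique they can carry any degree, and the abundance of colour options makes this robust; equivalently this is exactly the configuration of Corollary~\ref{cor:add} with $x,x'$ as the two apex vertices and $|K|\ge 5|V(H)|$ giving the room. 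Thus Block~2 is routine, and all the work is in Block~1.

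The heart of the argument is to add a Block~1 non-edge $x_i\gamma$ (with $\gamma\in K\setminus\{w\}$) in an \emph{arbitrary} colour $c$ and still create a rainbow $H$. I would make $x_i\gamma$ play the edge $au$: the sparse vertex $x_i$ plays $u$, the clique vertex $\gamma$ plays $a$, the dense graph $H-\{u,v\}$ sits inside $K$, and the second degree-$2$ vertex $v$ is realised by a neighbour of $x_i$. Since $x_i$ has two permanent neighbours $w,y_i$ joined to it by edges of distinct colours, at least one of these colours differs from $c$; letting that neighbour play $v$ through the edge $uv$, and letting $v$'s other permanent neighbour play $b$ (namely $w$ if $v=y_i$, or a clique vertex if $v=w$), I reduce to embedding $H-\{u,v\}$ as a rainbow subgraph of $K$ with $a,b$ pinned to $\gamma$ and the chosen clique vertex, avoiding only the colour $c$ and the colours of the two edges playing $uv$ and $vb$. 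Because this $P_4$ is induced and $d_H(u)=d_H(v)=2$, the vertices $u,v$ meet the rest of the copy only at $a$ and $b$, so the two low-degree pair-vertices genuinely suffice to realise $u,v$ while the possibly high-degree $a,b$ remain inside the clique; the pinned embedding is supplied by the machinery behind Lemma~\ref{le:gen_add}.

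The step I expect to be the real obstacle is precisely this robustness of Block~1 against the adversary's colour. A pair-vertex touches its copy through only two edges, so had $x_i\gamma$ repeated the colour of its unique available attachment, no rainbow $H$ could arise; this is what forces each pair-vertex to carry two differently coloured permanent edges—hence the triangle, and the factor $\tfrac32$ rather than a single edge to $K$—and it is the exact reason the argument needs the \emph{second} degree-$2$ vertex $v$ together with the induced $P_4$, which let $x_i$ re-route its mandatory second edge through $w$ or through its mate $y_i$, whichever avoids $c$, while keeping all heavy structure inside the clique. To finish I would check that every Block~1 realisation uses only the permanent triangle edges and the (internally unchanged) clique, so the additions never depend on one another, and then absorb the clique and the parity leftover into the constant $c_H$.
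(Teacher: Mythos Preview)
Your proposal is correct and follows essentially the same approach as the paper: the construction (a rainbow clique with a hub $w$ and pendant triangles $\{x_i,y_i,w\}$) is identical, and your Block~1 case split---routing the $P_4$ through $w$ when $c(x_iw)\ne c$ and through the mate $y_i$ when $c(x_iw)=c$---matches the paper's two cases exactly. The only differences are cosmetic (you take a clique of size $5|V(H)|$ rather than $|V(H)|+1$ or $|V(H)|+2$, affecting only $c_H$, and you handle Block~2 via Corollary~\ref{cor:add} instead of a direct embedding); one small care point for Block~2 is that the permanent edges $xw,\,x'w$ carry $G$-colours that could coincide with some $c_i$, so to meet the rainbow $K_{2,n-2}$ hypothesis you should apply the corollary over $K\setminus\{w\}$ rather than $K$.
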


\begin{proof} Let $h=|V(H)|$ and let $P$ be the induced $P_4$ of $H$ as described in the definition of $\mathscr{F}$. We may assume that $n\geq h+3$ since we can choose $c_H$ to be a constant greater than ${h+2\choose 2}$. Let $k=\lfloor\frac{n-h-1}{2}\rfloor$ and $t=n-2k$, so $h+1\leq t\leq h+2$. Let $G$ be a rainbow graph on $n$ vertices with $V(G)=\{v_1, \ldots, v_t, x_1, \ldots, x_k, y_1, \ldots, y_k\}$ and $|E(H)|=\{v_iv_j\colon\, 1\leq i<j\leq t\}\cup \{v_1x_i, v_1y_i, x_iy_i \colon\, i\in [k]\}$. Note that $|E(G)|={t\choose 2}+3k=\frac{3}{2}n+c_H$ for some constant $c_H$. It suffices to show that $G$ is weakly $H$-rainbow saturated.

Let $c_1, c_2, \ldots, c_{m}$ be an arbitrarily fixed list of pairwise distinct colors from $\mathbb{N}$ , where $m={n\choose 2}-|E(G)|$. Let $V=\{v_1, \ldots, v_t\}$ and $U=\{x_1, \ldots, x_k, y_1, \ldots, y_k\}$. We first consider the non-edges between $V$ and $U$. By symmetry, we only consider the addition of $v_2x_1$ in some color $c^{\ast}\in \{c_1, c_2, \ldots, c_{m}\}$. Since $G$ is rainbow, we can find a subset $V'\subseteq V\setminus \{v_1, v_2\}$ with $|V'|\geq |V|-3\geq h-2$ such that $G[V'\cup \{v_1, v_2\}]-v_1v_2$ contains no edges of color $c^{\ast}$. If $c_G(x_1v_1)\neq c^{\ast}$, then the addition of $v_2x_1$ in color $c^{\ast}$ creates a rainbow copy of $H$ in which $v_2x_1v_1v_i$ plays the role of the path $P$, where $v_i$ is a vertex of $V'$. If $c_G(x_1v_1)= c^{\ast}$, then the addition of $v_2x_1$ in color $c^{\ast}$ creates a rainbow copy of $H$ in which $v_2x_1y_1v_1$ plays the role of the path $P$. Therefore, for any ordering $e_1, \ldots, e_{2k(t-1)}$ of the non-edges between $V$ and $U$, the non-edges $e_i$ in color $c_i$ can be added to $G$, one at a time, so that every added edge creates a new rainbow copy of $H$. Let $G'=G+\{e_1, e_2, \ldots, e_{2k(t-1)}\}$ with $c(e_i)=c_i$ for each $i\in [2k(t-1)]$. We next consider the non-edges within $U$. By symmetry, we only consider the addition of $x_1x_2$ in some color $c^{\ast\ast}\in \{c_{2k(t-1)+1}, \ldots, c_{m}\}$. Since $G$ is rainbow and $c_1, c_2, \ldots, c_{m}$ are pairwise distinct, we can find a subset $V''\subseteq V\setminus \{v_2, v_3\}$ with $|V''|\geq |V|-5\geq h-4$ such that $G'[V''\cup \{v_2, v_3\}]-v_2v_3$ contains no edges of colors from $\{c^{\ast\ast}, c_{G'}(v_2x_1), c_{G'}(v_3x_2)\}$. Then the addition of $x_1x_2$ in color $c^{\ast\ast}$ creates a rainbow copy of $H$ in which $v_2x_1x_2v_3$ plays the role of the path $P$. Therefore, for any ordering $e_{2k(t-1)+1}, \ldots, e_{m}$ of the non-edges within $U$, the non-edges $e_{2k(t-1)+i}$ in color $c_{2k(t-1)+i}$ can be added to $G'$, one at a time, so that every added edge creates a new rainbow copy of $H$. This completes the proof.
\end{proof}

Note that Proposition~\ref{prop:degree2} also implies that for all cycles $C_{\ell}$ with $\ell\geq 5$, we have $\rwsat(n, C_{\ell})\leq \frac{3}{2}n+c_{\ell}$, where $c_{\ell}$ is a constant only depending on $\ell$. This statement also holds for $C_4$.\footnote{To see this, consider a rainbow copy of the graph $G_1$ or $G_2$ on $n$ vertices defined as follows. For an odd integer $n$, let $G_1$ be the graph with vertex set $\{u, v_1, v_2, \ldots, v_{n-1}\}$ and edge set $\{uv_i\colon\, 1\leq i\leq n-1\}\cup \{v_{2i-1}v_{2i}\colon\, 1\leq i\leq \frac{n-1}{2}\}$. For an even integer $n$, let $G_2$ be the graph with vertex set $\{u, x, y, z, v_1, v_2, \ldots, v_{n-4}\}$ and edge set $\{xy, xz, yz, ux, uy, uz\}\cup \{uv_i\colon\, 1\leq i\leq n-4\}\cup \{v_{2i-1}v_{2i}\colon\, 1\leq i\leq \frac{n-4}{2}\}$.}
In light of this, we propose the following question.

\begin{question}\label{ques:cycles}
For any integer $\ell \geq 4$, there is a constant $c_{\ell}$ such that $\rwsat(n, C_{\ell})= \frac{3}{2}n+c_{\ell}$.
\end{question}

\section*{Acknowledgement}

Jie Ma was supported by National Key R\&D Program of China 2023YFA1010201, National Natural Science Foundation of China grant 12125106, and Anhui Initiative in Quantum Information Technologies grant AHY150200.
Xihe Li was supported by the Fundamental Research Funds for the Central Universities.

\begin{spacing}{0.8}
\addcontentsline{toc}{section}{References}

\end{spacing}

\vspace{0.5cm}

\noindent{\it E-mail address}: lxhdhr@163.com
\vspace{0.1cm}

\noindent{\it E-mail address}: jiema@ustc.edu.cn
\vspace{0.1cm}

\noindent{\it E-mail address}: xiety@ustc.edu.cn

\end{document}